\documentclass[11pt]{article}
\usepackage{amsbsy}
\usepackage{amssymb,amsthm, amsmath, latexsym}
\usepackage{mathrsfs}
\usepackage[english]{babel}
\usepackage{polski}
\usepackage{color}
\usepackage[
    colorlinks=true,
    linkcolor= blue,
    citecolor=red
]{hyperref}
\usepackage{amsmath}
\usepackage{mathrsfs}
\usepackage{graphicx,amsmath,mathtools}
\usepackage{mathrsfs}
\usepackage{tikz}
\usepackage{wrapfig}
\usepackage{mathtools}
\usepackage{esint}
 \usepackage{stmaryrd}
\usepackage{amssymb,url,xspace}
 \usepackage[mathlines]{lineno}

\usepackage[breakable]{tcolorbox}
\usepackage{xcolor}
\usepackage{soulutf8}
\soulregister\cite7
\soulregister\underline7
\soulregister\eqref7
\soulregister\ref7
\definecolor{mycolor}{RGB}{229,228,200}
\definecolor{pinkcolor}{RGB}{247,202,193}
\definecolor{yel}{RGB}{255,255,179}

\newtcolorbox{warningbox}{
    colback=pinkcolor,    
    colframe=pinkcolor,   
    boxrule=0pt,
    sharp corners,      
    left=0pt,
    right=0pt,
    top=0pt,
    bottom=0pt,
    breakable}

\newtcolorbox{changebox}{
    colback=mycolor,    
    colframe=mycolor,   
    boxrule=0pt,
    sharp corners,      
    left=0pt,
    right=0pt,
    top=0pt,
    bottom=0pt,
    breakable}

\newtheorem{theorem}{Theorem}[section]
\newtheorem{proposition}{Proposition}[section]

\newtheorem{lemma}{Lemma}[section]

\newtheorem{definition}{Definition}[section]
\newtheorem{remark}{Remark}[section]

\numberwithin{equation}{section} \numberwithin{theorem}{section}
\numberwithin{proposition}{section} \numberwithin{lemma}{section}
\numberwithin{corollary}{section}
\numberwithin{definition}{section} \numberwithin{remark}{section}

 \newcommand{\T}{\S\kern .15em\relax }
 
\newcommand{\dm}{\,\mathrm{d}} 

\newcommand{\R}{\mathbb{R}}

\newcommand{\C}{{\mathcal C}}

\newcommand{\diam}{{\rm diam}}

\newcommand{\Div}{\mathrm{div} \ }

\newcommand{\Haus}{\mathcal{H}^{2}}

\newcommand{\Ccal}{\mathcal{C}}

\newcommand{\eps}{\varepsilon}
\newcommand{\curve}{\mathrm{Hom}}
\newcommand{\Lip}{\mathrm{Lip}}
\newcommand{\mres}{\mathbin{\vrule height 1.6ex depth 0pt width 0.13ex\vrule height 0.13ex depth 0pt width 1.3ex}}
\newcommand{\dist}{\mathrm{dist}}

\newcommand{\surf}{S}

\title{Existence and Regularity of Minimizers for a Plateau Approximation Problem}

\author{{\sc Eve Machefert}\footnote{INSA Lyon, CNRS, Ecole Centrale de Lyon, Université Claude Bernard Lyon 1, Université Jean Monnet, Intitut Camille Jordan, UMR5208, 69621 Villeurbanne, France ({\tt eve.machefert@insa-lyon.fr}).}}

\date{\today}

\begin{document}


\maketitle

\begin{abstract} In this paper, we study the functional introduced by the author in collaboration with Bonnivard, Bretin, and Lemenant \cite{bonnivard2025phasefieldapproximationplateaus}, which is designed to approximate Plateau’s problem. We establish the existence of a minimizer and prove its Hölder regularity. Our results may be viewed as a generalization to higher-dimensional surfaces of the one-dimensional work of Bonnivard, Lemenant, and Millot \cite{bonnivard2018phase} on the approximation of the Steiner problem.
\end{abstract}

\tableofcontents

\section{Introduction}

This paper is devoted to the study of the functional introduced in \cite{bonnivard2025phasefieldapproximationplateaus} to approximate a Plateau problem, which consists in finding a surface of minimal area spanning a collection of closed curves $\gamma_0,..., \gamma_n$ contained in the boundary of an open, bounded, convex set $\Ccal_0 \subset \R^3$. 

More precisely, the competitor surfaces are defined as the images of homotopies connecting the given curves. We define below the set of admissible homotopies connecting a curve $\gamma_i$ to $\gamma_j$:
\[
\curve(\gamma_i,\gamma_j) : = \{ \ell \in \mathrm{Lip}([0,1] \times \mathbb{S}^{1}, \overline{\Ccal_0}) \text{ such that } \ell(0) = \gamma_i \text{ and } \ell(1) = \gamma_{j} \}.
\]
Note that we require more than mere homotopies: we specifically consider Lipschitz homotopies. For any homotopy $\ell \in \curve(\gamma_i,\gamma_j)$, we define the associated surface $\surf_\ell$ as its image:
\begin{equation}\label{Def:surface_image}
	\surf_{\ell} := \ell([0,1] \times \mathbb{S}^{1}) \subset \overline{\Ccal_0}.
\end{equation}
We emphasize that the surface $\surf_{\ell}$ is $\mathcal{H}^{2}$-rectifiable and has finite $\mathcal{H}^{2}$-measure. Moreover, for any $u\in H^1(\R^3)$, the trace of $u$ on $\surf_{\ell}$ is well defined, since the set of points that are not Lebesgue points of $u$ is $\mathcal{H}^2$-negligible. 
However, to obtain the existence result, we need to impose additional properties on the competitors.
For $\Lambda>0$, we consider
\[
\curve^\Lambda(\gamma_i,\gamma_j) : = \{ \ell \in \curve(\gamma_i,\gamma_j) \text{ such that }  \;\Lip(\ell)\leq \Lambda \text{ and } \surf_\ell \text{ is } \Lambda-\text{upper Ahlfors regular} \ \}.\]

Let us recall the definition of upper Ahlfors regularity.
\begin{definition}
Let $E\subset \R^3$ and $\Lambda>0$. The set $E$ is said to be $\Lambda$-upper Ahlfors regular if for all $x\in E$ and all $r>0$ we have 
\[\Haus(E\cap B(x,r)) \leqslant \Lambda \pi r^2.\]
\end{definition}

The definition of our functional is based on the notion of a generalized geodesic distance between curves, associated with a given weight function $u$, which is defined as follows.
\begin{definition}\label{Def:geoDistance}
	Let $(\delta_\eps)_{\eps>0}$ be a sequence of positive numbers and $u \in C(\overline{\Ccal_0})$. We define the geodesic distance between $\gamma_i$ and $\gamma_j$:
	\begin{align*}
			&d_{u}(\gamma_i,\gamma_j) := \inf\left \{  \int_{\surf_{\ell}}( u^2 + \delta_\eps) d\Haus \ |\ \ell \in \curve^\Lambda(\gamma_i,\gamma_j)\right \}.
	\end{align*}
\end{definition}
Note that, in \cite{bonnivard2025phasefieldapproximationplateaus}, the geodesic distance is defined for Lipschitz homotopies $\ell$. The assumptions of uniform Lipschitz regularity and uniform upper Ahlfors regularity of the images are not required there. However, in the present work, we need to impose these assumptions in order to obtain the compactness result necessary for the existence theorem.

Let $\Ccal$ an smooth open bounded convex set containing the closure $\overline{\Ccal_0}$. This set $\Ccal$ will serve as our domain of study. Indeed, since we rely on PDE techniques, it is convenient to work within a smooth bounded domain. We now recall the definition of the functional introduced in \cite{bonnivard2025phasefieldapproximationplateaus}:
	\begin{equation}
		\label{functionalGeneral}
		F_{\eps}(u) := \eps \int_{\mathcal{C}}|\nabla u|^{2} dx + \frac{1}{4\eps}\int_{\mathcal{C}}(1-u)^{2}dx + \frac{1}{c_{\eps}}\sum_{i=1}^{n}d_{u}(\gamma_0, \gamma_{i}).
	\end{equation}
When, the sequences of positive numbers $(\delta_\eps)$ and $(c_\eps)$ are assumed to converge to zero, and to satisfy that $\delta_\eps/c_\eps$ converges to zero as $\eps \to 0$, \cite{bonnivard2025phasefieldapproximationplateaus} establishes that this functional approximate some Plateau's problem through a $\Gamma$-convergence result. 

For simplicity, we will assume in the following that the prescribed boundary consists of only two curves, $\gamma_0$ and $\gamma_1$. However, the results established in the present article remain valid when the boundary contains more than two curves.

In this paper, we are specifically interested in the decoupled functional defined below.
\begin{definition}
	For $u \in H^{1}(\C)$ such that $0\leqslant u \leqslant 1$, with $u = 1$ on $\partial{\C}$, and $\ell \in \curve^\Lambda(\gamma_0,\gamma_1)$,
	\begin{align*}
		&E_{\varepsilon}(u,\ell) := \varepsilon\int_{\mathcal{C}} |\nabla u|^{2} dx + \frac{1}{4\varepsilon}\int_{\mathcal{C}}(1-u)^{2} dx + \frac{1}{c_{\varepsilon}}\int_{\surf_{\ell}} (u^2 + \delta_\eps) d\Haus.
	\end{align*}
\end{definition}

\begin{remark}
	From Definition~\ref{Def:geoDistance} of the geodesic distance between closed curves, we know that for all $u\in H^{1}(\Ccal)$ such that $0\leqslant u \leqslant 1$ and $u = 1$ on $\partial{\Ccal}$,  
		\[F_{\varepsilon}(u) = \inf\left \{ E_{\varepsilon}(u,\ell)\ ,\ell \in \curve^\Lambda(\gamma_0,\gamma_1)  \right \}.\]
\end{remark}

The main result of this paper is the existence of a pair that minimizes this decoupled energy $E_\varepsilon$
\begin{equation*}
    \inf_{(u,\ell)} E_\varepsilon(u,\ell).
\end{equation*}

\begin{theorem}
\label{main th}

There exists a minimizing couple $(u,\ell)$ of the following problem in both variables
\begin{equation}
    \label{pb min}
    \min\left \{E_\eps(u,\ell), \, u \in  H^{1}(\C) \text{ such that } 0\leqslant u \leqslant 1 , \, u = 1 \text{ on } \partial{\C} \text{ and } \ell \in \curve^\Lambda(\gamma_0,\gamma_1) \right\}.
\end{equation}

Moreover, the minimizer $u$ is $\alpha$-Hölder continuous for all $0<\alpha<1$ with the estimate:
\[\|u\|_{C^{0,\alpha}(\mathcal{C})} \leqslant C_\alpha \frac{1+\Lambda c_\varepsilon ^{-1}}{\varepsilon^{\alpha}}.\]
	
\end{theorem}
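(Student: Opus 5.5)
We prove existence by the direct method, applied jointly in both variables, and then obtain the Hölder bound by treating $u$ as a minimizer of a Dirichlet-type energy perturbed by a surface term whose mass is controlled by the upper Ahlfors regularity of $\surf_\ell$.

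\textbf{Minimizing sequence and compactness.} Let $(u_k,\ell_k)$ be a minimizing sequence. The infimum is finite: take $u\equiv 1$ and any fixed admissible $\ell$. Since $0\leqslant u_k\leqslant 1$ and the energy bounds $\eps\|\nabla u_k\|_2^2$, the sequence $(u_k)$ is bounded in $H^1(\C)$. Up to a subsequence, $u_k\rightharpoonup u$ weakly in $H^1$ and strongly in $L^2$. The constraints $0\leqslant u\leqslant 1$ and $u=1$ on $\partial\C$ pass to the limit, the latter by weak closedness of the affine trace space. The maps $\ell_k$ are $\Lambda$-Lipschitz with values in the compact set $\overline{\Ccal_0}$, so Arzelà–Ascoli gives $\ell_k\to\ell$ uniformly. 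The limit $\ell$ is $\Lambda$-Lipschitz and keeps the boundary conditions $\ell(0)=\gamma_0$, $\ell(1)=\gamma_1$. Moreover $\surf_{\ell_k}\to\surf_\ell$ in Hausdorff distance.

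Upper Ahlfors regularity survives this limit. For $x\in\surf_\ell$, choose $x_k\in\surf_{\ell_k}$ with $x_k\to x$. Then $\Haus\mres\surf_{\ell_k}$ is a sequence of measures of uniformly bounded mass, since the mass is at most $\Lambda^2\cdot 2\pi$ by the area formula. Let $\mu$ be a weak-$*$ limit of these measures. Two facts hold:
\begin{itemize}
\item[(i)] $\mu(B(x,r))\leqslant\liminf_k \Haus(\surf_{\ell_k}\cap B(x_k,r+\eta))\leqslant \Lambda\pi(r+\eta)^2$ for every $\eta>0$.
\item[(ii)] $\mu\geqslant \Haus\mres\surf_\ell$ (up to a constant).
\end{itemize}
Fact (ii) is the lower density bound. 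It is a Go{\l}\k{a}b-type lower semicontinuity for images of Lipschitz maps of a fixed parameter domain; I would establish it through the area formula, using that a uniform limit of Lipschitz maps has Jacobian controlled via weak-$*$ convergence of Jacobians (the determinants of $2\times 2$ minors are weakly continuous). I expect this to be the main obstacle. In particular, getting exactly $\Haus\mres\surf_\ell\leqslant\mu$ with constant $1$, rather than a multiplicity-weighted version, needs care. If the constant cannot be made $1$, I would instead use the cruder lower semicontinuity $\Haus(\surf_\ell\cap U)\leqslant\liminf\Haus(\surf_{\ell_k}\cap U)$ for open $U$. This follows from the uniform Ahlfors bound together with a covering argument in the spirit of Go{\l}\k{a}b's theorem for uniformly concentrated sets.

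\textbf{Lower semicontinuity of the surface term.} We must show
\[
\int_{\surf_\ell}(u^2+\delta_\eps)\,d\Haus\leqslant\liminf_k\int_{\surf_{\ell_k}}(u_k^2+\delta_\eps)\,d\Haus .
\]
The difficulty is that $u_k$ converges only weakly. I would first replace each $u_k$ by the minimizer $\tilde u_k$ of $E_\eps(\cdot,\ell_k)$. This only lowers the energy, and by the Hölder estimate proved below, $(\tilde u_k)$ is equicontinuous. Hence $\tilde u_k\to u$ uniformly, and combined with the weak-$*$ lower semicontinuity of the measures this gives the inequality. The other terms are lower semicontinuous by weak convergence, so $(u,\ell)$ is a minimizer.

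\textbf{Existence for fixed $\ell$ and Hölder regularity.} For fixed $\ell$, existence of a minimizer follows from the direct method. The trace term is weakly lower semicontinuous: by Ahlfors regularity, $\Haus\mres\surf_\ell$ belongs to the dual of $H^1$, because of the trace inequality for measures satisfying $\mu(B_r)\leqslant\Lambda\pi r^2$ in dimension 3, i.e. measures of dimension $2>3-2$. The functional is also convex in $u$.

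For the Hölder bound, use the Euler–Lagrange equation
\[
-2\eps\Delta u-\tfrac{1}{2\eps}(1-u)+\tfrac{2}{c_\eps}u\,\Haus\mres\surf_\ell=0 .
\]
The measure on the right satisfies $\mu(B_r)\leqslant \Lambda\pi r^2/c_\eps$. I would then run a Morrey/Campanato decay estimate. Compare $u$ on $B_r$ with its harmonic replacement $h$; the difference solves an equation whose right-hand side is bounded by
\[
\eps^{-1}\Bigl(\tfrac{r^{3}}{\eps}+\Lambda c_\eps^{-1}r^{2}\Bigr)
\]
for all $r$. This yields $\int_{B_r}|\nabla u|^2\lesssim (1+\Lambda c_\eps^{-1})^2\,r^{1+2\alpha}\eps^{-2\alpha}$ for every $\alpha<1$. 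Morrey's Dirichlet growth theorem then gives the stated estimate. Working at scale $\eps$, by rescaling $x\mapsto x/\eps$, produces the $\eps^{-\alpha}$ factor. Near $\partial\C$ the same argument applies using $u=1$ and the smoothness of $\C$.
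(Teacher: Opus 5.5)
The overall architecture of your proposal matches the paper's: replace $u_k$ by the minimizer of $E_\eps(\cdot,\ell_k)$ to gain equicontinuity, use Arzel\`a--Ascoli in both variables, and pass the Ahlfors bound to the limit through points $x_k\to x$. But the step everything rests on is left unproved: the inequality $\Haus\mres\surf_\ell\leqslant\mu$ with constant exactly $1$. You need it twice. First, to get $\ell\in\curve^\Lambda$ with the same $\Lambda$; a constant $c<1$ only gives $\Lambda/c$-upper Ahlfors regularity. Second, to turn $\int(\tilde u_k^2+\delta_\eps)\,d\Haus\mres\surf_{\ell_k}\to\int(u^2+\delta_\eps)\,d\mu$ into lower semicontinuity of the surface term.

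Neither of your routes delivers this inequality.
\begin{itemize}
\item Weak-$*$ continuity of the $2\times2$ minors, plus convexity of the norm, gives $\int_A J\ell\leqslant\liminf_k\int_A J\ell_k$. That is lower semicontinuity of area counted with multiplicity. The area formula gives $\Haus(\surf_{\ell_k}\cap B)\leqslant\int_{\ell_k^{-1}(B)}J\ell_k$, which is the wrong direction for bounding $\Haus(\surf_{\ell_k}\cap B)$ from below. Moreover, $\int J\ell_k$ can be much larger than $\Haus(\surf_{\ell_k})$ when $\ell_k$ folds.
\item Your fallback is not cruder: lower semicontinuity on every open set is equivalent to $\Haus\mres\surf_\ell\leqslant\mu$. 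It also cannot come from the uniform upper Ahlfors bound, because upper density bounds never produce lower semicontinuity. The results on uniformly concentrated sets you allude to require a \emph{lower} density property of the approximating sets, which is exactly what has to be proved.
\end{itemize}

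The missing idea is topological, and it uses the uniform convergence of parametrizations of a two-dimensional domain. For $\Haus$-a.e.\ $x_0\in\surf_\ell$ one can write $x_0=\ell(t_0)$ with $\ell$ differentiable at $t_0$ and $d_{t_0}\ell$ of rank two. Let $L$ be the tangent affine map, $P$ the orthogonal projection onto the tangent plane, and $H_r=t_0+L^{-1}(\overline D(x_0,r))$. For $r$ small and $n$ large, $\ell_n$ is $C\eta r$-close to $L(\cdot-t_0)$ on $H_r$, so $\ell_n(H_r)\subset\overline B(x_0,(1+C\eta)r)$. The closed curve $P\circ\ell_n(\partial H_r)$ has winding number one around every point of $D(x_0,(1-C\eta)r)$, yet it is null-homotopic in $P(\ell_n(H_r))$; hence $P(\ell_n(H_r))\supset D(x_0,(1-C\eta)r)$. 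Since $P$ is $1$-Lipschitz, $\mu(\overline B(x_0,(1+C\eta)r))\geqslant\pi(1-C\eta)^2r^2$. So the upper density of $\mu$ is at least $1$ at $\Haus$-a.e.\ point of $\surf_\ell$, and the standard density comparison gives $\mu\geqslant\Haus\mres\surf_\ell$.

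A smaller point concerns the H\"older part. Your Campanato route is a valid alternative to the paper's splitting $u_\rho=w_\rho-\frac{1}{\rho c_\eps\eps}v_\rho$ with $v_\rho\in W^{1,p}$ for all $p<\infty$. However, the mass bound $\eps^{-1}(r^3/\eps+\Lambda c_\eps^{-1}r^2)$ combined with $|u-h|\leqslant 2$ only gives $\int_{B_r}|\nabla(u-h)|^2\lesssim r^2$, hence $\alpha\leqslant\frac12$. To reach every $\alpha<1$ you must use the trace inequality \eqref{eq lem 2.3}: it gives $\int|u-h|\,d\Haus\mres\surf_\ell\leqslant C\Lambda\|\nabla(u-h)\|_{L^1(B_r)}\leqslant C\Lambda r^{3/2}\|\nabla(u-h)\|_{L^2(B_r)}$, which upgrades the decay to $r^3$.
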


The study of this energy is motivated by the $\Gamma$-convergence result established in \cite{bonnivard2025phasefieldapproximationplateaus}, where this decoupled functional was introduced both in the proof of the $\Gamma$-convergence and for numerical applications. Recall that the phase-field approach they proposed is a generalization to Plateau’s problem of the method introduced in \cite{bonnivard2015approximation} for Steiner’s problem. A similar study of the existence of solutions for the decoupled functional in the Steiner case was carried out in \cite{bonnivard2018phase}. The present work is inspired by this paper, but there are some differences that we explain below.

To prove Theorem~\ref{main th}, we first consider the optimization problems with either $u$ or $\ell$ fixed. As usual, establishing the existence of minimizers requires both a compactness result and the lower semi-continuity of the functional. The compactness follows from the choice of the class of competitors, so the main issue is to establish lower semi-continuity.

We begin by studying the following minimization problem 
\begin{equation}
\label{pb min l fixe}
    \inf_{u} E_\varepsilon(u,\ell), \text{ where } \ell \in \curve^\Lambda(\gamma_0,\gamma_1).
\end{equation}
The following Proposition will be proven in Section~\ref{section l fixe}. 
\begin{proposition}
For any given $\ell \in \curve^\Lambda(\gamma_0, \gamma_1)$, there exists a unique minimizer $u \in H^1(\Ccal)$ of $ E_\varepsilon(\cdot,l)$ which is globally $C^{0,\alpha}$, for all $0< \alpha <1$: there exists a constant $C_\alpha>0$ such that 
\[\|u \|_{C^{0,\alpha}(\mathcal{C})} \leqslant C_\alpha \frac{1+\Lambda c_\varepsilon ^{-1}}{\varepsilon^{\alpha}}.\]
\end{proposition}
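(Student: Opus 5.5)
The plan is to handle existence and uniqueness by the direct method combined with strict convexity. Hölder regularity will then come from the Euler--Lagrange equation, rescaled to unit scale, in which the surface term appears as a measure with quadratic growth.

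\textbf{Existence and uniqueness.} Fix $\ell\in\curve^\Lambda(\gamma_0,\gamma_1)$ and set $\mu:=\Haus\mres \surf_\ell$. By the $\Lambda$-upper Ahlfors regularity, $\mu(B(x,r))\leqslant \Lambda\pi r^2$ for all $x$ and $r$. In $\R^3$ this is stronger than the growth $r^{1+\delta}$ required by the Adams-type trace theorem. Hence the trace map $H^1(\Ccal)\to L^2(\mu)$ is bounded, and it is even compact. I will prove this compactness directly: cover $\surf_\ell$ by balls of radius $r$, and on each ball estimate $\int |w|^2 d\mu$ by $C r\|w\|_{H^1}^2$ plus a term controlled by $\|w\|_{L^2}$.

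The admissible class $\{u\in H^1(\Ccal):0\leqslant u\leqslant 1,\ u=1 \text{ on } \partial\Ccal\}$ is convex and weakly closed. A minimizing sequence is bounded in $H^1$, so a subsequence converges weakly in $H^1$ and strongly in $L^2(\Ccal)$. By the compact trace embedding it also converges strongly in $L^2(\mu)$. Therefore all three terms pass to the limit: the first is lower semicontinuous and the other two are continuous. This gives a minimizer.

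Uniqueness follows from strict convexity. The map $u\mapsto \frac{1}{4\eps}\int(1-u)^2$ is strictly convex, and the other two terms are convex.

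\textbf{Euler--Lagrange equation.} Truncation shows that the constraint $0\leqslant u\leqslant 1$ is inactive: replacing $u$ by $\min(\max(u,0),1)$ does not increase the energy. Hence $u$ is a weak solution of
\[
-\Delta u=\frac{1-u}{4\eps^2}-\frac{1}{\eps c_\eps}\,u\,\mu \quad\text{in }\Ccal,\qquad u=1 \text{ on }\partial\Ccal .
\]

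\textbf{Rescaling.} Set $v(y)=u(\eps y)$, and let $\nu$ be the push-forward of $\mu$ under $x\mapsto x/\eps$, multiplied by $\eps^{-2}$. Then $\nu(B(y,r))\leqslant\Lambda\pi r^2$, and $v$ solves
\[
-\Delta v=\tfrac14(1-v)-\tfrac{\eps}{c_\eps}\,v\,\nu .
\]
Since $0\leqslant v\leqslant 1$, the right-hand side is a signed measure $f$ satisfying $|f|(B(y,r))\leqslant C(1+\Lambda c_\eps^{-1})r^{2}$ for $r\leqslant 1$. Here I assume $\eps\leqslant 1$; otherwise the factor $\eps$ is simply absorbed into the constant.

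\textbf{Main obstacle: the local Hölder estimate.} The key step is to show that a measure with growth $r^{2}=r^{(3-2)+1}$ in $\R^3$ produces a potential that is $C^{0,\alpha}$ for every $\alpha<1$. This is the borderline case: one gets a log-Lipschitz modulus, not Lipschitz. I would first try the Newtonian potential directly. On a unit ball, write $v=h+\int G(\cdot,z)\,df(z)$ with $h$ harmonic. Split the integral for $|y-y'|=\rho$ into the regions $|z-y|<2\rho$ and $|z-y|\geqslant 2\rho$. The first region, after dyadic summation using the growth bound, gives a contribution $C\rho$. The second gives $C\rho\log(1/\rho)$ through $|\nabla G|\lesssim |z-y|^{-2}$ and dyadic shells. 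Together with the sup bound $|v|\leqslant 1$, this yields $[v]_{C^{0,\alpha}(B_{1/2})}\leqslant C_\alpha(1+\Lambda c_\eps^{-1})$. As a fallback, a Campanato iteration based on $\int_{B_r}|\nabla v|^2\lesssim r^{1+2\alpha}$ should give the same conclusion.

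\textbf{Boundary and conclusion.} Near $\partial\Ccal$ the measure term vanishes, because $\surf_\ell\subset\overline{\Ccal_0}\Subset\Ccal$. There $v$ solves a linear equation with bounded right-hand side and constant boundary datum on a smooth domain. Standard boundary regularity then gives uniform $C^{0,\alpha}$ bounds, with the number of balls and the domain dependence absorbed into $C_\alpha$. Scaling back gives $[u]_{C^{0,\alpha}}=\eps^{-\alpha}[v]_{C^{0,\alpha}}$. Together with $\|u\|_\infty\leqslant 1$, this yields
\[
\|u\|_{C^{0,\alpha}(\mathcal C)}\leqslant C_\alpha\,\frac{1+\Lambda c_\eps^{-1}}{\eps^\alpha}.
\]
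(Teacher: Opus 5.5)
Your argument is correct. Its skeleton matches the paper's: direct method plus strict convexity, truncation to get $0\le u\le 1$ and the Euler--Lagrange equation, blow-up at scale $\eps$, a separate treatment of the layer near $\partial\Ccal$, and $\|u\|_\infty\le 1$ for pairs at distance $\gtrsim\eps$. The analytic steps, however, are done differently.

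\emph{Existence.} The paper needs no compactness of the trace. Lemma~\ref{lem 2.3}, via Ziemer's BV trace inequality, shows that $B[\ell]$ is a bounded nonnegative quadratic form on $H^1(\Ccal)$, hence weakly lower semicontinuous. Your compact-trace route also works: with a cutoff, the same inequality applied to $w^2$ gives $\int w^2\,d\mu\le C\Lambda\|w\|_{L^2}\|w\|_{H^1}$. It simply proves more than is needed.

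\emph{Local estimate near $S_\ell$.} In Lemma~\ref{lem 2.10} the paper places the surface term $T_\rho$ in $W^{-1,p}(B_2)$ for every $p$ and writes it as $\mathrm{div} f$ with $f\in L^p$. It then solves $-\Delta\xi=f$ in $W^{2,p}$ and subtracts $v_\rho=\mathrm{div}\,\xi$, reducing to a Poisson equation with bounded right-hand side. This gives $\nabla u_\rho\in L^p(B_1)$ for all $p<\infty$, and Morrey's embedding with $p=3/(1-\alpha)$ concludes. Your dyadic Green-potential computation avoids the $W^{-1,p}$ duality and Calder\'on--Zygmund theory and uses the Ahlfors bound directly. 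It also yields an explicit log-Lipschitz modulus, $|u(x)-u(y)|\le C(1+\Lambda c_\eps^{-1})\frac{|x-y|}{\eps}\bigl(1+\log\frac{\eps}{|x-y|}\bigr)$ for $|x-y|<\eps/2$.

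\emph{Boundary layer.} The paper works at the original scale, where the right-hand side has size $\eps^{-2}$. It therefore first proves exponential decay of $1-u$ and $\nabla u$ away from $S_\ell$ by a barrier (Lemmas~\ref{lem estimation v proche du bord} and~\ref{lem 2.7}). Your unit-scale boundary estimate on the almost flat domain $\Ccal/\eps$ bypasses this. The cost is losing the decay information, which the H\"older bound does not require.

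Two small corrections. First, with your normalization $\nu=\eps^{-2}(x\mapsto x/\eps)_\#\mu$, testing against $\varphi(\cdot/\eps)$ gives $-\Delta v=\frac14(1-v)-\frac{1}{c_\eps}v\,\nu$, not $\frac{\eps}{c_\eps}v\,\nu$. The bound $|f|(B(y,r))\le C(1+\Lambda c_\eps^{-1})r^2$ that you then use is exactly what the correct coefficient gives. So nothing downstream changes, and no assumption $\eps\le1$ is needed at that point.

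Second, in rescaled variables there are of order $\eps^{-3}$ unit balls. The global seminorm must therefore be assembled as a supremum: use the local estimate when $|y-y'|<1/2$ and $\|v\|_\infty\le1$ otherwise. It cannot be obtained by absorbing a number of balls into $C_\alpha$. Also, your boundary step requires the unit balls meeting $\partial(\Ccal/\eps)$ to miss $S_\ell/\eps$, i.e.\ $\eps$ small compared with $\dist(\partial\Ccal_0,\partial\Ccal)$. This is the same restriction $\eps<\eps_0$ that the paper imposes.
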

The assumption that the surface $S_\ell$ is $\Lambda$-upper Ahlfors regular is the key ingredient to establish lower semi-continuity, and hence the existence of minimizers in this setting. We then apply classical results from elliptic regularity to derive Hölder estimates for the minimizer. Note that these estimates depend explicitly on $\Lambda$ and $\varepsilon$, but are independent of the specific surface $S_\ell$. This part is strongly inspired by \cite{bonnivard2018phase}.

Then, in Section~\ref{section u fixe} we consider the minimization problem
\begin{equation}
\label{pb min u fixe}
    \inf_{\ell \in \curve^\Lambda} E_\varepsilon(u,\ell), \text{ where } u \in H^1(\Ccal)\cap C(\overline{\Ccal}),
\end{equation}
and prove the following Proposition.
\begin{proposition}
\label{prop 1.2}
For all $u \in H^1(\C)\cap C(\overline{\Ccal})$ there exists a minimizer for the problem \eqref{pb min u fixe}. 
\end{proposition}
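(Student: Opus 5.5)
We argue by the direct method in the variable $\ell$, with $u\in H^1(\Ccal)\cap C(\overline{\Ccal})$ fixed. Since $u$ and $\delta_\eps$ are fixed, minimizing $E_\eps(u,\cdot)$ amounts to minimizing
\[
G(\ell):=\int_{\surf_\ell}(u^2+\delta_\eps)\,d\Haus
\]
over $\curve^\Lambda(\gamma_0,\gamma_1)$. We assume this class is nonempty; otherwise the statement is void. The infimum $m$ is then finite, and we take a minimizing sequence $(\ell_k)$.

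\textbf{Compactness.} Each $\ell_k$ is $\Lambda$-Lipschitz from the compact set $[0,1]\times\mathbb{S}^1$ into the compact set $\overline{\Ccal_0}$. By Arzel\`a--Ascoli, a subsequence converges uniformly to some $\ell$ with $\Lip(\ell)\le\Lambda$. Uniform convergence preserves the boundary conditions $\ell(0)=\gamma_0$, $\ell(1)=\gamma_1$. Uniform convergence of the maps also gives Hausdorff convergence of the images $\surf_{\ell_k}\to\surf_\ell$: every point of $\surf_\ell$ is a limit of points of $\surf_{\ell_k}$, and conversely.

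\textbf{Closedness of the class.} We check that $\surf_\ell$ is $\Lambda$-upper Ahlfors regular. Fix $x\in\surf_\ell$ and $r>0$, and pick $x_k\in\surf_{\ell_k}$ with $x_k\to x$. Then the measures $\mu_k:=\Haus\mres\surf_{\ell_k}$ satisfy $\mu_k(B(x,r))\le\mu_k(B(x_k,r+|x-x_k|))\le\Lambda\pi(r+|x-x_k|)^2$. These measures have uniformly bounded mass, since $\Haus(\surf_{\ell_k})\le\Lambda^2\cdot 2\pi$ by the Lipschitz bound. So up to a subsequence $\mu_k\rightharpoonup^*\mu$, and $\mu$ is supported in $\surf_\ell$ by Hausdorff convergence.

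The key claim is the lower density inequality $\mu\ge\Haus\mres\surf_\ell$. This is the \textbf{main obstacle}. We would prove it via the Golab-type lower semicontinuity argument for Lipschitz images, using the fact that $\surf_\ell$ is $2$-rectifiable and that the surfaces span fixed boundary curves. The first approach would be to show the lower density bound $\Theta^2_*(\mu,y)\ge1$ for $\Haus$-a.e.\ $y\in\surf_\ell$. To get it, we would apply a projection/degree argument at approximate tangent planes where $\ell$ is differentiable with full rank. Near such a point, the maps $\ell_k$ converge uniformly to a map whose orthogonal projection onto the tangent plane has nonzero local degree. Hence the projections of $\surf_{\ell_k}\cap B(y,\rho)$ cover a disc of radius nearly $\rho$, giving $\mu(\overline B(y,\rho))\gtrsim\pi\rho^2(1-o(1))$. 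The standard density theorem then yields $\mu\ge\Haus\mres\surf_\ell$.

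Granting this claim, we obtain $\Haus(\surf_\ell\cap B(x,r))\le\mu(\overline{B}(x,r'))\le\Lambda\pi r'^2$ for every $r'>r$. Letting $r'\downarrow r$ gives the Ahlfors bound, so $\ell\in\curve^\Lambda(\gamma_0,\gamma_1)$.

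\textbf{Lower semicontinuity.} Since $u^2+\delta_\eps$ is continuous and nonnegative on $\overline{\Ccal}$, weak-$*$ convergence gives
\[
m=\lim_k\int(u^2+\delta_\eps)\,d\mu_k=\int(u^2+\delta_\eps)\,d\mu\ge\int_{\surf_\ell}(u^2+\delta_\eps)\,d\Haus=G(\ell).
\]
The first equality uses continuity of $u$, which is why the statement assumes $u\in C(\overline{\Ccal})$. The final inequality is the claim $\mu\ge\Haus\mres\surf_\ell$. Therefore $\ell$ is a minimizer of problem \eqref{pb min u fixe}.
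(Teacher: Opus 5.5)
Your proposal is correct and follows essentially the same route as the paper: Arzel\`a--Ascoli compactness, a weak-$*$ limit $\mu$ of $\Haus\mres\surf_{\ell_k}$, and the Go\l{}\k{a}b-type inequality $\mu\ge\Haus\mres\surf_\ell$. That inequality comes from projecting onto the tangent plane at full-rank differentiability points of $\ell$ and showing that the projection of $\ell_k$ covers a disc of radius $(1-C\eta)r$; lower semicontinuity then follows from continuity of $u^2+\delta_\eps$, together with closedness of the Ahlfors bound. Two details differ only cosmetically: you use a degree argument where the paper uses its half-line/$\pi_1$ argument, and when checking the Ahlfors bound you approximate $x\in\surf_\ell$ by points $x_k\in\surf_{\ell_k}$, which makes explicit a point the paper passes over.
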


As previously mentionned, the main difficulty in the proof of Proposition~\ref{prop 1.2} lies in establishing the lower semicontinuity of the functional, which relies on the following Lemma.

 \begin{lemma}
 \label{golab}
Let $\ell_n : [0,1] \times  \mathbb{S}^1 \to \R^3$ be a sequence of uniformly Lipschitz functions, which converges uniformly to a Lipschitz function $\ell  : [0,1] \times  \mathbb{S}^1 \to \R^3$.
Then, for all open set $A$

$$\liminf \Haus \mres S_{\ell_n}(A) \geqslant \Haus \mres S_\ell(A),$$
where $S_\ell$ (respectively $S_{\ell_n}$) denotes the image of the Lipschitz function $\ell$ (respectively $\ell_n)$.
\end{lemma}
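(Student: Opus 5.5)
The plan is to compare the limit measure of the $\Haus\mres S_{\ell_n}$ with $\Haus\mres S_\ell$ through a density argument, using a local degree argument at good points of $S_\ell$. First, since the $\ell_n$ are uniformly Lipschitz (say with constant $L$), the area formula gives $\Haus(S_{\ell_n})\leqslant L^2\,|[0,1]\times\mathbb{S}^1|$, so the measures $\mu_n:=\Haus\mres S_{\ell_n}$ have uniformly bounded mass. Fix an open set $A$ and extract a subsequence realizing $\liminf_n \mu_n(A)$ along which $\mu_n\rightharpoonup\mu$ weakly-$*$ as Radon measures. Since $A$ is open, $\mu(A)\leqslant\liminf_n\mu_n(A)$. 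It therefore suffices to prove
\[
\mu\geqslant \Haus\mres S_\ell .
\]
By the differentiation theorem for Radon measures, this follows if
\[
\liminf_{r\to0}\frac{\mu(\overline B(y,r))}{\pi r^2}\geqslant 1 \quad\text{for } \Haus\text{-a.e. } y\in S_\ell .
\]

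Next I isolate the good points. By Rademacher, $\ell$ is differentiable off a null set $N$, and $\Haus(\ell(N))=0$ since $\ell$ is Lipschitz. By the area formula, $\Haus(\ell(\{J\ell=0\}))=0$, where $J\ell$ is the $2$-Jacobian. Hence for $\Haus$-a.e. $y\in S_\ell$ there is $x_0$ with $\ell(x_0)=y$ at which $\ell$ is differentiable and $D\ell(x_0)=:T$ has rank $2$. Let $\Pi$ be the plane $T(\R^2)$ and $P$ the orthogonal projection onto $y+\Pi$. For small $\rho$ we have $\ell(x)=y+T(x-x_0)+o(\rho)$ on the disc $D(x_0,\rho)$. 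The affine map $x\mapsto y+T(x-x_0)$ sends $D(x_0,\rho)$ onto an ellipse in $y+\Pi$. Choosing $\rho$ so that this ellipse contains the planar disc of radius $(1+\eta)r$ around $y$, its boundary stays at distance $\gtrsim\eta r$ from $B_\Pi(y,r)$.

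The key step, and the main obstacle, is to show that the approximating images still cover this region after projection. Uniform convergence gives $\|\ell_n-\ell\|_\infty\to0$, so for $n$ large (depending on $r$), $\|P\circ\ell_n - (y+T(\cdot-x_0))\|_{L^\infty(D(x_0,\rho))}$ is at most $o(\rho)+\|\ell_n-\ell\|_\infty$, which is $<\eta r$ once $r$ is small and then $n$ is large. A straight-line homotopy between $P\circ\ell_n|_{\partial D(x_0,\rho)}$ and the affine boundary map then avoids every point of $B_\Pi(y,r)$. So the Brouwer degree of $P\circ\ell_n$ on $D(x_0,\rho)$ with respect to any point of that planar disc equals $\pm1$, and each such point has a preimage. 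Hence $P(S_{\ell_n}\cap \overline B(y,Cr))\supset B_\Pi(y,r)$, with $C$ depending on $\eta$ and the error (the image of $D(x_0,\rho)$ lies in $B(y,(1+2\eta)\|T\|\rho)$, comparable to $r$ up to the conditioning of $T$). Since $P$ is $1$-Lipschitz,
\[
\mu_n(\overline B(y,Cr))\geqslant \pi r^2 .
\]
A naive choice of $C$ only gives a density bound $\geqslant C^{-2}$. To obtain density $\geqslant1$, I would refine this step: instead of discs $D(x_0,\rho)$, use domains $T^{-1}(B_\Pi(0,(1+\eta)r))+x_0$, which $\ell$ maps almost exactly onto the ball $B(y,(1+\eta)r)$ up to $o(r)$. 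Then the images of $\ell_n$ on these domains lie in $B(y,(1+3\eta)r)$, which gives $\mu_n(\overline B(y,(1+3\eta)r))\geqslant\pi r^2$.

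Finally I pass to the limit. Upper semicontinuity of weak limits on closed sets gives $\mu(\overline B(y,(1+3\eta)r))\geqslant\pi r^2$ for all small $r$. Letting $r\to0$ and then $\eta\to0$ yields lower density $\geqslant1$ at $\Haus$-a.e. $y\in S_\ell$, hence $\mu\geqslant\Haus\mres S_\ell$. Combined with the first paragraph, this proves the lemma along the chosen subsequence, and therefore for the full $\liminf$.
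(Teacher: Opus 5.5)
Your proposal is correct and follows essentially the same route as the paper. The skeleton matches: you take a weak limit of $\mathcal{H}^2\mres S_{\ell_n}$ and reduce to a density lower bound at images of points where $\ell$ is differentiable with non-degenerate differential. In your refined version, the parameter domain $x_0+T^{-1}(B_\Pi(0,(1+\eta)r))$ is, up to renaming and rescaling, the paper's set $H_r=t_0+L^{-1}(\overline{D}(x_0,r))$, and you then project onto the tangent plane, use the $1$-Lipschitz area bound, and pass to the limit on closed balls.

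The one real difference is how you show that the projected image covers a planar disc: you use Brouwer degree and a straight-line homotopy on the boundary of that domain. This is a cleaner and more rigorous substitute for the paper's half-line/subdivision argument (Lemma~\ref{lem subdivision}) and its $\pi_1$ contractibility reasoning. You also extract the subsequence realizing the $\liminf$ for the given $A$ before taking the weak limit, a point the paper glosses over.
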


This Lemma, proven in Section~\ref{section u fixe}, constitutes a central and original contribution of the present article and relies crucially on the specific definition of the class of surfaces under consideration. It can be viewed as a generalization of Go\l{}\k{a}b’s theorem to the two-dimensional Hausdorff measure, and its proof follows a similar strategy. In Go\l{}\k{a}b’s theorem, the first step consists in establishing the rectifiability of the limit set; in our setting, this property is immediate, since the surfaces are assumed to be images of Lipschitz functions. We then study the weak limit of the measures $(\Haus \mres S_{\ell_n})$ and, using the tangent plane of the limit surface, derive a lower bound on the upper density of the limit measure to conclude. The key point of our proof relies on a topological argument, stating that a Lipschitz image, which is uniformly close to a disk, must have at least almost the density of a disk (see the proof of Lemma~\ref{golab}).

We emphasize that the proof of this Lemma does not rely on the specific definition set $\mathbb{S}^1 \times [0,1]$; it only assumes it to be a two-dimensional subset of $\R^3$. Moreover, the result is also expected to remain valid in dimensions higher than two, although the proof would then require more delicate topological arguments.

Finally, in Section~\ref{section rien fixe}, Theorem~\ref{main th} is proved by combining these results, in particular the Hölder regularity and the preceding Lemma.

\textbf{Acknowledgments.}
 I would like to thank Antoine Lemenant for suggesting this problem to me as well as Matthieu Bonnivard for their help in this project. 
This work was partially supported by the IUF grant of Antoine Lemenant and by the ANR project STOIQUES.

\section{Existence and regularity of a minimizer \texorpdfstring{$u$}{u} for \texorpdfstring{$\ell \in \curve^\Lambda$}{l in Hom lambda} fixed}
\label{section l fixe}

\subsection{Existence and uniqueness with fixed $\ell$}

\begin{proposition}
\label{prop : holder regularity}There exits a constant $\varepsilon_0>0$, depending only on $\Ccal$ and $\Ccal_0$, such that for all $\varepsilon < \varepsilon_0$ the following property holds. \\
For any given $\ell \in \curve^\Lambda(\gamma_0, \gamma_1)$, there exists a unique minimizer $u$ of $E_\varepsilon^\ell := E_\varepsilon(\cdot,\ell)$ which is globally $C^{0,\alpha}$: for all $0< \alpha <1$ there exists a constant $C_\alpha$ such that 
\[\|u\|_{C^{0,\alpha}(\mathcal{C})} \leqslant C_\alpha \frac{1+\Lambda c_\varepsilon ^{-1}}{\varepsilon^{\alpha}}.\]
\end{proposition}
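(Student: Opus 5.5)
Existence follows from the direct method on the affine class $\mathcal{A}=\{u\in H^1(\Ccal),\ 0\le u\le 1,\ u=1 \text{ on } \partial\Ccal\}$; uniqueness from strict convexity; the Hölder bound from viewing the Euler--Lagrange equation as an elliptic equation whose right-hand side is a measure with controlled growth.

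\textbf{Existence and uniqueness.} Take a minimizing sequence $(u_k)\subset\mathcal{A}$. The Dirichlet term bounds $\|\nabla u_k\|_{L^2}$, and $0\le u_k\le 1$ bounds $\|u_k\|_{L^2}$. So, up to a subsequence, $u_k\rightharpoonup u$ weakly in $H^1$, strongly in $L^2$, and $u\in\mathcal{A}$ since this set is convex and closed. The bulk terms are weakly lower semicontinuous. The delicate term is $\int_{\surf_\ell}u_k^2\,d\Haus$. Here I would use that $\mu:=\Haus\mres\surf_\ell$ is a finite measure with $\mu(B(x,r))\le \Lambda\pi r^2$. A growth of order $r^{2}>r^{3-2}$ is exactly what gives a bounded trace operator $H^1(\Ccal)\to L^2(\mu)$: for the trace inequality for measures with $\mu(B_r)\le Cr^{d-2+\eta}$ one can argue via Adams' theorem, or via a Riesz-potential estimate on $\int\!\int |x-y|^{-1}d\mu\,d\mu$. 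I also expect this operator to be compact, since the growth exponent is strictly above $1$. If so, $u_k\to u$ in $L^2(\mu)$ and the surface term actually converges. If compactness is awkward, continuity plus convexity of $v\mapsto\int v^2\,d\mu$ suffices, because a convex continuous functional is weakly lower semicontinuous. Strict convexity of $u\mapsto \eps\int|\nabla u|^2+\frac{1}{4\eps}\int(1-u)^2$ on $\mathcal{A}$, together with convexity of the surface term, then gives uniqueness.

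\textbf{Euler--Lagrange equation.} I would first check that the constraint $0\le u\le1$ is inactive: truncating $u\mapsto\min(\max(u,0),1)$ does not increase any term. Next, compare $u$ with the unconstrained minimizer over $u=1$ on $\partial\Ccal$; the same truncation argument shows that minimizer already satisfies $0\le u\le 1$. Hence $u$ solves weakly
\[
-2\eps\Delta u+\frac{1}{2\eps}(u-1)+\frac{2}{c_\eps}u\,\mu=0 \quad\text{in }\Ccal,\qquad u=1 \text{ on }\partial\Ccal.
\]
Setting $w=1-u$, this becomes $-\Delta w=\frac{1}{4\eps^2}(1-w)\cdot$(bounded) $-\frac{1}{\eps c_\eps}u\,\mu$, with $w=0$ on $\partial\Ccal$. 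The right-hand side is an $L^\infty$ function of size $\eps^{-2}$ plus a measure $\nu$ with $|\nu|(B_r)\le \Lambda\pi r^2/(\eps c_\eps)$.

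\textbf{Hölder estimate (main obstacle).} I would rescale to remove $\eps$. Set $v(y)=u(x_0+\eps y)$ on unit balls. Then $-\Delta v=\frac14(1-v)-\frac{\eps}{c_\eps}$-type measure term. The rescaled measure $\tilde\mu$ is defined by $\tilde\mu(B_\rho)=\eps^{-2}\mu(B_{\eps\rho})\le \Lambda\pi\rho^2$, and the coefficients work out so that $\eps\cdot\eps^{-2}\cdot\eps^{2}/\eps$ gives a factor of order $c_\eps^{-1}$. The rescaled equation thus has a bounded right-hand side plus a measure whose growth constant is $\Lambda c_\eps^{-1}$. Its Newtonian potential $\int |y-z|^{-1}d\tilde\mu(z)$ is $C^{0,\alpha}$ for every $\alpha<1$, with norm $\lesssim\Lambda c_\eps^{-1}$: this is a Morrey/Campanato estimate, split into near and far dyadic shells, using growth $\rho^{1+\beta}$ for every $\beta<1$. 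Interior Schauder/Morrey theory then gives $[v]_{C^{0,\alpha}(B_{1/2})}\le C_\alpha(1+\Lambda c_\eps^{-1})$, using $\|v\|_\infty\le1$. Scaling back yields $[u]_{C^{0,\alpha}}\le C_\alpha(1+\Lambda c_\eps^{-1})\eps^{-\alpha}$. Near $\partial\Ccal$, observe that $\surf_\ell\subset\overline{\Ccal_0}$ lies at positive distance from $\partial\Ccal$. Choose $\eps_0$ below this distance, so that balls of radius $\eps$ centred near the boundary avoid $\mu$. There $u$ solves a smooth linear equation with boundary value $1$, and boundary Schauder estimates on the smooth domain $\Ccal$ apply. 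Gluing the interior and boundary estimates over pairs of points at distance $<\eps$, and using $|u(x)-u(y)|\le1\le \eps^{-\alpha}|x-y|^\alpha$ for points at distance $\ge\eps$, gives the global estimate. The main step to get right is the sharp potential estimate for the measure term: the constant must depend only on the Ahlfors constant $\Lambda$, not on $\ell$.
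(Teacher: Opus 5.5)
Your proposal is correct and has the same skeleton as the paper: the direct method, using continuity of the surface term from upper Ahlfors regularity; strict convexity for uniqueness; truncation to get $0\le u\le 1$ and an unconstrained Euler--Lagrange equation; blow-up at scale $\varepsilon$, where the measure term has growth constant of order $\Lambda c_\varepsilon^{-1}$; a separate argument near $\partial\Ccal$ using $\surf_\ell\subset\overline{\Ccal_0}$; and the trivial bound when $|x-y|\ge\varepsilon$. The key analytic steps, however, are done by different means.

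\emph{Trace inequality.} The paper uses the $BV$ trace inequality for measures with $\mu(B_r)\le Cr^2$ (Ziemer), applied to $\overline{u}\,\overline{v}\in W^{1,1}(\R^3)$. This gives $\|B[\ell]\|\le C\Lambda$, and after rescaling it shows that the localized term $T_\rho$ lies in $W^{-1,p}(B_2)$ for every $p$, with norm at most $C\Lambda\rho^2$. Your use of Adams' theorem or Riesz energy is equally adequate for existence.

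\emph{Interior estimate.} The paper writes $T_\rho=\mathrm{div}\, f$ with $f\in L^p$ and solves $-\Delta\xi=f$ in $W^{2,p}$. It takes $\mathrm{div}\,\xi$ as the particular solution, gets $\nabla u_\rho\in L^p(B_1)$ for all $p<\infty$, and concludes by Morrey's embedding with $p=3/(1-\alpha)$. You instead bound the $C^{0,\beta}$ seminorm of the Newtonian potential directly by dyadic near/far splitting. There the far-field sum, which diverges logarithmically at $\beta=1$, is what restricts you to $\alpha<1$. Your route is more elementary: it needs no $W^{-1,p}$ divergence representation and no $W^{2,p}$ theory. The paper's route gives the $W^{1,p}$ bounds as a byproduct. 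Two points to make explicit:
\begin{itemize}
\item restrict the measure to $B_2$, since the full rescaled measure has mass of order $\varepsilon^{-2}$;
\item record $\|N\|_{L^\infty(B_2)}\le C\Lambda$ (same dyadic computation), which you need to control $\|h\|_{L^\infty}$ for the remainder $h=v+c_\varepsilon^{-1}N$ with bounded Laplacian.
\end{itemize}

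\emph{Boundary estimate.} The paper builds a barrier to get exponential decay of $1-u$ and $\nabla u$ away from $\surf_\ell$. It then runs a global $C^1$ estimate on the annulus $\Ccal\setminus\overline{\Ccal_1}$, where the bad factor $\varepsilon^{-2}$ is absorbed by $e^{-C\eta_0/\varepsilon}$. Your local boundary Schauder estimate at scale $\varepsilon$ is shorter. However, you should say why it yields $|\nabla u|\le C/\varepsilon$ with $C$ independent of $\varepsilon$. The reason is that the dilated domains $\varepsilon^{-1}(\Ccal-x_0)$ have uniformly bounded (indeed improving) $C^{1,\alpha}$ boundary character. A fixed-scale estimate would only give $|\nabla u|\lesssim\varepsilon^{-2}$, which is too weak. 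In exchange for its length, the paper's approach also gives exponential decay, which the stated bound does not need.

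The sign slip in your equation for $w=1-u$ is harmless, since you then work with the rescaled $u$, whose equation is correct.
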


 The proof of this Proposition is divided in several lemmas. The first Lemma is a classical result on integral over upper-Ahlfors regular surfaces. 

\begin{lemma}
\label{lem 2.3}
Let $\ell \in \curve^\Lambda(\gamma_0, \gamma_1)$. We introduce the application
\[ \begin{array}{cccc}
B[\ell]  : &H^1(\mathcal{C}) \times H^1(\mathcal{C})  &\longrightarrow& \R, \\
  & (u,v) &\longmapsto &\int_{S_\ell}uv \dm \Haus.
\end{array}\]
$B[\ell]$ is a symmetric nonnegative bilinear and continuous form on $H^1(\mathcal{C})$ with the estimate 
\[\|B[\ell]\| \leqslant C \Lambda,\] where $C>0$ is a constant only depending on $\mathcal{C}$.

\end{lemma}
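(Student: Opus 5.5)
The plan is to reduce everything to a trace inequality of the form
\[\int_{S_\ell} w^2 \dm \Haus \leqslant C\Lambda \|w\|_{H^1(\mathcal{C})}^2 \quad \text{for all } w\in H^1(\mathcal{C}).\]
Granting this, bilinearity and symmetry are immediate, and nonnegativity follows from $B[\ell](u,u)=\int_{S_\ell}u^2\geqslant 0$. Continuity then comes from Cauchy--Schwarz in $L^2(\Haus\mres S_\ell)$:
\[|B[\ell](u,v)|\leqslant \|u\|_{L^2(S_\ell)}\|v\|_{L^2(S_\ell)}\leqslant C\Lambda\|u\|_{H^1}\|v\|_{H^1}.\]
Throughout, the trace of $u$ on $S_\ell$ is understood via the precise representative, which is well defined $\Haus$-a.e. as recalled in the introduction.

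To prove the trace inequality, first extend. Since $\mathcal{C}$ is a smooth bounded domain, there is a bounded extension operator $H^1(\mathcal{C})\to H^1(\R^3)$. By density and a Fatou/precise-representative argument, it suffices to treat $w\in C_c^\infty(\R^3)$ with $\|w\|_{H^1(\R^3)}\leqslant C\|w\|_{H^1(\mathcal{C})}$. The quasi-continuous representatives of the approximating sequence converge $\Haus\mres S_\ell$-a.e. (up to a subsequence), because $\Haus\mres S_\ell$ has finite energy by the estimate below and so does not charge sets of zero capacity.

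The key step uses the measure $\mu=\Haus\mres S_\ell$. It is finite, since $S_\ell\subset\overline{\mathcal{C}_0}$ is bounded and Ahlfors regular, and it satisfies $\mu(B(x,r))\leqslant 4\Lambda\pi r^2$ for every $x\in\R^3$. This holds for all centers, not just those on $S_\ell$: if $B(x,r)\cap S_\ell\neq\emptyset$, pick $y$ in the intersection and note $B(x,r)\subset B(y,2r)$. Growth of order $r^{2}=r^{n-1}$ with $n=3$ is exactly the condition for the classical trace inequality $\int|w|^2 d\mu\leqslant C\,M\,\|w\|_{H^1}^2$, where $M=\sup\mu(B(x,r))/r^2$; this is the Adams/Maz'ya inequality. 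To keep things self-contained, I would prove it via the Riesz potential. Write $|w|\leqslant C\, I_1(|\nabla w|)$ with $I_1 f(x)=\int f(y)|x-y|^{-2}dy$; this is valid for compactly supported $w$. The dual estimate $\|I_1\mu\|_{L^2(\R^3)}^2\leqslant C\,M\,\mu(\R^3)$ is not enough by itself, so I would instead use the duality
\[\int (I_1 f)^2 d\mu=\int f\, I_1[(I_1f)\mu]\,dx.\]
Combined with a Schur-type test, this gives
\[\int (I_1 f)^2\,d\mu\leqslant C M\|f\|_{L^2}^2.\]

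The main obstacle is exactly this boundedness $L^2(\R^3)\to L^2(\mu)$ of $I_1$ with constant linear in $M$. There is also a minor mismatch: the truncation to a compact region is handled by the $L^2$ part of the $H^1$ norm through a cutoff, since $\mu$ lives in $\overline{\mathcal{C}_0}$. For the Schur test I would split $|x-y|^{-2}$ dyadically. Using the growth bound $\mu(B(x,r))\leqslant CMr^2$, one gets $\int_{|x-y|\sim 2^k}|x-y|^{-2}\,d\mu(y)\lesssim M$ uniformly in $k$, which diverges logarithmically when summed. So a plain Schur test fails, and I expect to need a weighted test, such as the Kerman--Sawyer or Maz'ya capacitary argument. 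Alternatively, one can use the more elementary route of covering $S_\ell$ by a Lipschitz parametrization. That route gives $\int_{S_\ell}w^2\leqslant\Lambda^2\int_{[0,1]\times\mathbb{S}^1}(w\circ\ell)^2$, but it loses linearity in $\Lambda$. Hence I would rely on the Adams trace theorem, cited as a classical result, for the linear dependence $C\Lambda$.
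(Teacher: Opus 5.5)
Your reduction to $\int_{S_\ell}w^2\,d\Haus\le C\Lambda\|w\|_{H^1(\Ccal)}^2$ and the Cauchy--Schwarz step are fine. However, that trace inequality is the entire content of the lemma, and the proposal never proves it. The Schur test on $I_1$ diverges, as you observe, and the fallback citation is misstated.

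The growth condition $\mu(B(x,r))\le Mr^{n-1}$ is the sharp hypothesis for the Meyers--Ziemer $W^{1,1}\to L^1(\mu)$ trace inequality, not for $H^1\to L^2(\mu)$. Adams' trace theorem requires $q>p$. For $p=2$ and $r^2$-growth in $\R^3$ it gives $\|I_1 f\|_{L^4(\mu)}\le CM^{1/4}\|f\|_{L^2}$. The endpoint $p=q=2$ is instead characterized by Maz'ya's capacitary condition, which you would still have to verify.

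Moreover, your intermediate bound $\int (I_1f)^2\,d\mu\le CM\|f\|_{L^2}^2$ cannot hold with $C$ independent of the support. Take $\mu=\Haus\mres D_R$, a flat disk of radius $R$ (so $M\le\pi$), and $f=\mathbf{1}_{B(0,2R)}$. Then $I_1f\ge4\pi R$ on $D_R$, so the left side is $\gtrsim R^4$ while $\|f\|_{L^2}^2\sim R^3$. Any correct argument must therefore exploit $S_\ell\subset\overline{\Ccal_0}$, which your sketch only mentions in passing.

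The missing idea, which the paper uses, is to take the product first and work in $L^1$, where $r^2$-growth is exactly the right hypothesis. For the extensions $\bar u,\bar v\in H^1(\R^3)$, the product $\bar u\bar v$ lies in $W^{1,1}(\R^3)$, and by Cauchy--Schwarz $\|\bar u\bar v\|_{L^1}+\|\nabla(\bar u\bar v)\|_{L^1}\le 2\|\bar u\|_{H^1}\|\bar v\|_{H^1}$. The Meyers--Ziemer inequality $\int|g|\,d\mu\le CM\|g\|_{BV(\R^3)}$ \cite[Theorem 5.12.4]{Zi}, with $M\le4\pi\Lambda$, then gives $|B[\ell](u,v)|\le C\Lambda\|u\|_{H^1}\|v\|_{H^1}$ immediately, with no potential theory.

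If you prefer to keep your route, it can be repaired without Kerman--Sawyer by applying Schur to $TT^*$ rather than to $T=I_1$. Since $\int_{\R^3}|x-y|^{-2}|x'-y|^{-2}\,dy=c|x-x'|^{-1}$, one gets $\|I_1\|^2_{L^2\to L^2(\mu)}\le c\sup_x\int|x-x'|^{-1}\,d\mu(x')$. Use the layer-cake formula together with $\mu(\R^3)\le\pi\Lambda\,\diam(\Ccal_0)^2$. This bounds the supremum by $c\int_0^\infty\min\{4\pi\Lambda s^{-2},\pi\Lambda\,\diam(\Ccal_0)^2\}\,ds=4\pi c\Lambda\,\diam(\Ccal_0)$, which is linear in $\Lambda$. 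Alternatively, Adams with $q=4$ followed by H\"older against the finite mass $\mu(\R^3)\le C\Lambda$ also yields the linear bound.
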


\begin{proof}
Clearly $B[\ell]$ is a nonnegative symmetric bilinear form. Hence, only the continuity remains to be proven. We denote by $\mu$ the finite measure $\Haus \mres S_\ell$.

\emph{Step 1.} Let $x\in \R^3$ and $r>0$ be such that $S_\ell \cap B(x,r) \neq \emptyset$. Fix $z\in S_\ell \cap B(x,r)$ then $S_\ell \cap B(x,r) \subset S_\ell \cap B(z,2r)$. Thus, the upper Ahlfors regularity of $S_\ell$ yields 
\begin{align*}
    \mu(B(x,r)) \leqslant \mu(B(z,2r)) \leqslant \Lambda \pi 4r^2,
\end{align*}
which leads to $\sup\left \{\frac{\mu(B(x,r))}{r^2}, r>0, x\in \R^3\right\} \leqslant 4\Lambda \pi$. Thus, from \cite[Theorem 5.12.4]{Zi} there exists a constant $C>0$, such that for all $u\in BV(\R^3)$ 

\[\left|\int u \dm \mu\right| \leqslant C\Lambda \|u\|_{BV(\R^3)}.\]

In particular, this yields for $w \in W^{1,1}(\R^3) \subset BV(\R^3)$ that
\begin{equation}
    \label{eq lem 2.3}
    \int |w|\dm \mu = \int_{S_\ell} |w|\dm \Haus \leqslant C\Lambda \left(\|w\|_{L^1} + \int_{\R^3} |\nabla w|\dm x \right).
\end{equation}

Hence, $w$ (or to be more exact the precise representative of $w$) is in $L^1(\mu)$ for all $w \in W^{1,1}(\R^3)$. 

\emph{Step 2.} Since $\C$ is assumed to be convex, there exists a continuous linear extension of $u$ on $\R^3$
\[ \begin{array}{ccc}
 H^1(\mathcal{C}) &\longrightarrow &H^1(\R^3),  \\
   u&\longmapsto &\overline{u}.
\end{array}\]
For any $u,v \in H^1(\C)$, $\overline{u} \, \overline{v} \in W^{1,1}(\R^3)$ and \textit{Step 1} yields $\overline{u} \, \overline{v} \in L^1(\mu)$, with the estimate
\begin{align*}
    |B[\ell](u,v)| &\leqslant \int_{S_\ell}|uv|\dm \Haus 
    = \int_{S_\ell}|\overline{u} \, \overline{v}|\dm \Haus 
    \leqslant C\Lambda \left(\|\overline{u} \, \overline{v}\|_{L^1} + \int_{\R^3}|\nabla(\overline{u} \, \overline{v})|\dm x\right) .
\end{align*}
Thus, since the extension on $\R^3$ is linear continuous, Cauchy-Schwarz inequality leads to the continuity of $B[\ell]$ ,
\[|B[\ell](u,v)| \leqslant \Lambda C_\C \|u\|_{H^1(\C)}\|v\|_{H^1(\C)},\]
which achieves the proof of this Lemma. 
\end{proof}

Hence, we can rewrite the energy functional as follows:

\[E^{\ell}_{\varepsilon}(u) := \varepsilon \int_{\mathcal{C}} |\nabla u|^{2} dx + \frac{1}{4\varepsilon}\int_{\mathcal{C}}(1-u)^{2} dx + \frac{1}{c_{\varepsilon}} B[\ell](u,u) +\frac{\delta_\varepsilon}{c_\varepsilon}\Haus(S_\ell).\]

\begin{proposition}
For any given $\ell \in \curve^\Lambda(\gamma_0, \gamma_1)$, there exists a unique $u \in H^1(\mathcal{C})$ minimizer of $E_\varepsilon^\ell$. 
\end{proposition}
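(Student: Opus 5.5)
We use the direct method of the calculus of variations in the admissible class
\[
\mathcal{A} := \{ u \in H^1(\C) : 0 \leqslant u \leqslant 1,\ u = 1 \text{ on } \partial \C \}.
\]
This set is nonempty (it contains $u \equiv 1$) and convex. It is closed in $H^1(\C)$: the trace operator is continuous, and the pointwise constraints pass to a.e. limits. Being convex and strongly closed, it is also weakly closed.

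\emph{Existence.} Let $(u_k)$ be a minimizing sequence in $\mathcal{A}$. Since $E^\ell_\varepsilon(1) = \frac{1}{c_\varepsilon}(1+\delta_\varepsilon)\Haus(S_\ell) < \infty$, the infimum is finite. The bound $\varepsilon \int |\nabla u_k|^2 \leqslant C$ together with $0\leqslant u_k\leqslant 1$ on the bounded set $\C$ gives a uniform $H^1$ bound. Up to a subsequence, $u_k \rightharpoonup u$ weakly in $H^1(\C)$, strongly in $L^2(\C)$, and a.e., with $u \in \mathcal{A}$. Each term then passes to the limit as follows.
\begin{itemize}
\item The Dirichlet term is weakly lower semicontinuous.
\item The term $\frac{1}{4\varepsilon}\int(1-u_k)^2$ converges by strong $L^2$ convergence.
\item By Lemma~\ref{lem 2.3}, $B[\ell]$ is a continuous, symmetric, nonnegative bilinear form on $H^1(\C)$. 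The quadratic form $u \mapsto B[\ell](u,u)$ is therefore convex and continuous, hence weakly lower semicontinuous. Equivalently, $B[\ell](u_k,u_k) \geqslant 2B[\ell](u_k,u) - B[\ell](u,u)$, and $B[\ell](u_k,u) \to B[\ell](u,u)$ because $v\mapsto B[\ell](v,u)$ is a continuous linear form.
\item The constant $\frac{\delta_\varepsilon}{c_\varepsilon}\Haus(S_\ell)$ is harmless.
\end{itemize}
Hence $u$ is a minimizer.

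\emph{Uniqueness.} We show that $E^\ell_\varepsilon$ is strictly convex on $\mathcal{A}$. The three terms $\varepsilon\int|\nabla u|^2$, $\frac{1}{c_\varepsilon}B[\ell](u,u)$ and the constant are convex, the middle one because $B[\ell]$ is nonnegative. The term $\frac{1}{4\varepsilon}\int(1-u)^2$ is strictly convex in $u$ as an element of $L^2(\C)$. Suppose $u_1\neq u_2$ are two minimizers. Their midpoint lies in $\mathcal{A}$ by convexity, and its energy is strictly smaller than the common minimal value, since
\[
\int_\C\Big(1-\tfrac{u_1+u_2}{2}\Big)^2 = \tfrac12\int_\C(1-u_1)^2+\tfrac12\int_\C(1-u_2)^2-\tfrac14\int_\C(u_1-u_2)^2
\]
and $\|u_1-u_2\|_{L^2}>0$. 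This is a contradiction.

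\emph{Main point.} The only delicate step is the lower semicontinuity of the surface term $\int_{S_\ell}u^2\,\dm\Haus$ under weak $H^1$ convergence. Traces on a two-dimensional set are not compact in general, so we cannot argue by pointwise convergence on $S_\ell$. This is exactly where Lemma~\ref{lem 2.3}, and through it the upper Ahlfors regularity of $S_\ell$, is used: continuity plus nonnegativity of the quadratic form suffices, with no compactness of the trace needed.
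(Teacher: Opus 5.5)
Your proof is correct and follows the same route as the paper: the direct method, with weak lower semicontinuity of the surface term coming from the continuity and nonnegativity of $B[\ell]$ in Lemma~\ref{lem 2.3}, and uniqueness from strict convexity, which you make explicit through the parallelogram identity for $\int_\C(1-u)^2$. The one small difference is the admissible class. You build $0\leqslant u\leqslant 1$ into it, whereas the paper minimizes over $1+H^1_0(\C)$, so that \eqref{eq Euler Lagrange} holds as an equality in $H^{-1}$, and recovers $0\leqslant u\leqslant 1$ afterwards by truncation in Lemma~\ref{lem 2.4}; your argument carries over to that class essentially unchanged, using $\frac{1}{4\varepsilon}\int_\C(1-u_k)^2$ instead of the pointwise bound to control $\|u_k\|_{L^2}$.
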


\begin{proof}
Lemma~\ref{lem 2.3} ensures that the energy  $E_\varepsilon^\ell$ is lower semicontinuious with respect to the weak convergence in $H^1(\C)$ and that $E_\varepsilon^\ell(u)$ is finite for every $u\in H^1(\C)$. The direct method of calculus of variations yields therefore, the existence of a minimizer $u$. Moreover, owing to the strict convexity of the functional  $E_\varepsilon^\ell$ we obtain the uniqueness of $u$.
\end{proof} 

Note that the minimizer $u$ of $E_\varepsilon^\ell$ satisfies the Euler-Lagrange equation

\begin{equation}
\label{eq Euler Lagrange}
\begin{cases}
-\Delta u = \frac{1}{4\varepsilon^2}(1-u)- \frac{1}{\varepsilon c_\varepsilon}B[\ell](u,\cdot) & \text{ in } H^{-1}(\mathcal{C}),\\
u = 1 & \text{ on } \partial \mathcal{C} .
\end{cases}
\end{equation}

\subsection{Hölder estimates for $u$ when $S_\ell$ is fixed and upper Ahlfors regular}

Now that we have established the existence and uniqueness of a minimizer $u$ we focus on proving its Hölder regularity. In this Section, $u$ denotes the solution of the problem~\eqref{pb min l fixe} with $\ell \in~\curve^\Lambda(\gamma_0,\gamma_1)$ fixed.  

\begin{proposition}
\label{prop holder}
Let $\ell \in~\curve^\Lambda(\gamma_0,\gamma_1)$ and let $u$ be the minimizer of $E_\varepsilon^\ell$. Then, $u$ is $\alpha$-Hölder continuous for all $0<\alpha<1$ with the estimate:
\[\|u\|_{C^{0,\alpha}(\mathcal{C})} \leqslant C_\alpha \frac{1+\Lambda c_\varepsilon ^{-1}}{\varepsilon^{\alpha}}.\]
\end{proposition}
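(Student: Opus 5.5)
The strategy is to read the Euler--Lagrange equation \eqref{eq Euler Lagrange} as a Poisson equation with a measure right-hand side. We bound the Morrey norm of that measure using the upper Ahlfors regularity of $S_\ell$. Then we apply Morrey/Campanato-type estimates, after rescaling so that the $\varepsilon^{-\alpha}$ dependence appears.

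\emph{Preliminary bounds.} First, $0\leqslant u\leqslant 1$: truncating $u$ by $\min(\max(u,0),1)$ does not increase any term of $E_\varepsilon^\ell$, so by uniqueness the minimizer equals its truncation. Set $w:=1-u\in H^1_0(\mathcal{C})$, so that $0\leqslant w\leqslant 1$. Then \eqref{eq Euler Lagrange} reads
\[
-\Delta w + \frac{1}{4\varepsilon^2}w = \frac{1}{\varepsilon c_\varepsilon}\, u\,\Haus\mres S_\ell =: \nu ,
\]
with $\nu$ a nonnegative measure. By Step 1 of the proof of Lemma~\ref{lem 2.3} and $0\leqslant u\leqslant 1$, we have $\nu(B(x,r))\leqslant 4\pi\Lambda r^2/(\varepsilon c_\varepsilon)$ for all $x$ and $r$. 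Thus $\nu$ lies in the Morrey space $L^{1,2}$ (in dimension $3$), which is exactly the borderline where the Newtonian potential is $C^{0,\alpha}$ for every $\alpha<1$ but not Lipschitz.

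\emph{Rescaling.} Put $v(y):=w(x_0+\varepsilon y)$. Then $-\Delta v+\tfrac14 v=\varepsilon^2\nu_\varepsilon$, where the rescaled measure satisfies $\varepsilon^2\nu_\varepsilon(B(y,\rho))\leqslant \varepsilon^2\cdot\frac{4\pi\Lambda(\varepsilon\rho)^2}{\varepsilon c_\varepsilon}\varepsilon^{-3}=4\pi\Lambda c_\varepsilon^{-1}\rho^2$. This is independent of $\varepsilon$. We then need a unit-scale estimate: if $-\Delta v+\frac14 v=\mu$ on $B(0,2)$, with $|v|\leqslant 1$ and $\mu(B(y,\rho))\leqslant M\rho^2$, then $[v]_{C^{0,\alpha}(B(0,1))}\leqslant C_\alpha(1+M)$. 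To prove it, split $v=h+G$, where $G=\int_{B_2}\Gamma(\cdot-z)\,d(\mu-\tfrac14 v)(z)$ is the Newtonian potential and $h$ is harmonic. The harmonic part is bounded by $\|v\|_\infty+\|G\|_\infty$. For $G$, use the standard estimate
\[
|G(y)-G(y')|\leqslant C\int \min\Bigl(\tfrac{1}{|y-z|},\tfrac{|y-y'|}{|y-z|^2}\Bigr)d|\mu|
\]
and integrate in dyadic shells using the growth bound. The shells give $\sum_k M 2^{-k}$ near $y$ and $M|y-y'|\log(1/|y-y'|)$ far away, yielding $C_\alpha M|y-y'|^\alpha$ for any $\alpha<1$. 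Alternatively one can cite Adams' or Morrey's lemma here. Scaling back gives $[w]_{C^{0,\alpha}}\leqslant C_\alpha(1+\Lambda c_\varepsilon^{-1})\varepsilon^{-\alpha}$ on interior balls $B(x_0,\varepsilon)\subset\mathcal{C}$. For $|x-x'|\geqslant\varepsilon$, the trivial bound $|w(x)-w(x')|\leqslant 1\leqslant \varepsilon^{-\alpha}|x-x'|^\alpha$ suffices. Together with $\|u\|_\infty\leqslant 1$, this gives the claimed norm bound.

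\emph{Main obstacle: the boundary.} We need the estimate up to $\partial\mathcal{C}$, where balls of radius $\varepsilon$ leave the domain. Since $S_\ell\subset\overline{\mathcal{C}_0}$ and $\mathcal{C}_0\Subset\mathcal{C}$, the measure $\nu$ vanishes in a neighbourhood $\{\dist(\cdot,\partial\mathcal{C})<d_0\}$ with $d_0$ depending only on $\mathcal{C},\mathcal{C}_0$. There $w$ solves $-\Delta w+\frac{1}{4\varepsilon^2}w=0$ with $w=0$ on the smooth boundary $\partial\mathcal{C}$ and $0\leqslant w\leqslant1$. I would use boundary gradient estimates after the same $\varepsilon$-rescaling: near a flattened boundary of curvature $O(\varepsilon)$, the rescaled equation has uniformly controlled coefficients. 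This is presumably where $\varepsilon<\varepsilon_0$ (namely $\varepsilon\ll d_0$) is needed. It gives $|\nabla w|\leqslant C/\varepsilon$ in the strip $\{\dist(\cdot,\partial\mathcal{C})<d_0/2\}$, which is better than needed. Finally, patch the interior and boundary regions by a covering argument with overlaps of size $\gg\varepsilon$; this last step is routine.
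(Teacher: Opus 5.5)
Your proposal is correct. Its architecture matches the paper's: truncation, rescaling at scale $\varepsilon$, an interior estimate driven only by the growth bound $\nu(B(x,r))\leqslant C\Lambda r^2/(\varepsilon c_\varepsilon)$, a separate treatment of the strip near $\partial\mathcal{C}$ where the measure vanishes, and the trivial bound for $|x-y|\geqslant\varepsilon$. Both key estimates, however, are obtained with different tools.

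\textbf{Interior estimate.} The paper turns the Ahlfors bound into a duality statement. Through the BV trace inequality from the proof of Lemma~\ref{lem 2.3}, the rescaled surface term $T_\rho$ lies in $W^{-1,p}(B_2)$ for every $p<\infty$, with norm at most $C\Lambda\rho^2$. The paper writes $T_\rho=\mathrm{div} f$ and uses $W^{2,p}$ theory to build a corrector $v_\rho\in W^{1,p}\cap L^\infty$ with $-\Delta v_\rho=T_\rho$. Then $u_\rho+(\rho c_\varepsilon\varepsilon)^{-1}v_\rho$ solves an equation with bounded right-hand side and is $C^{1,\alpha}$. This gives $\|\nabla u_\varepsilon\|_{L^p(B_1)}\leqslant C_p(1+\Lambda/c_\varepsilon)$, and Morrey's embedding with $p=3/(1-\alpha)$ yields the Hölder bound. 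You instead estimate the Newtonian potential of the measure directly by dyadic shells. This is more elementary: no trace theorem, no divergence representation of $W^{-1,p}$ functionals, no $W^{2,p}$ theory. It even gives a log-Lipschitz modulus. The paper's route, in exchange, yields $\nabla u\in L^p_{\mathrm{loc}}$ for all $p<\infty$ as a by-product.

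\textbf{Boundary estimate.} The paper first proves with a barrier that $1-u\leqslant e^{-C\dist(x,S_\ell)/\varepsilon}$. It then applies unscaled $C^1$ estimates on the fixed annulus $\mathcal{C}\setminus\overline{\mathcal{C}_1}$, where this exponential smallness is needed to absorb the factor $\varepsilon^{-2}$ from the zeroth-order term. Your rescaling to a nearly flat boundary at unit scale uses only $0\leqslant w\leqslant 1$ and avoids the barrier entirely. The cost is losing the exponential decay of $1-u$ and $\nabla u$ away from $S_\ell$, which the statement does not require.

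\textbf{One small correction.} Your displayed kernel bound must be symmetrized in $y,y'$. In the region $|z-y|<2|y-y'|$ the difference $|\Gamma(y-z)-\Gamma(y'-z)|$ must be bounded by $C(|y-z|^{-1}+|y'-z|^{-1})$, not by $|y-z|^{-1}$ alone; a measure concentrated near $y'$ breaks the one-sided bound. Also, the near-region contribution is $CM|y-y'|$, not $\sum_k M2^{-k}$. The growth bound controls both terms by $CM|y-y'|$, so your conclusion is unaffected.
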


We start by establishing that $u$ is positive and bounded by $1$ almost everywhere. Then, since $u \in H^1(\C)\cap L^\infty(\C)$ and satisfies outside of the surface $S_\ell$ the elliptic equation 
\[-\Delta u = \frac{1}{4\varepsilon^2}(1-u) \text{ in } \mathcal{D}'(\C\setminus S_\ell),\]
the classical elliptic regularity theory will yield the smoothness of $u$ inside the domain and outside of the surface $S_\ell$. This is done in the following Lemma.

\begin{lemma}
\label{lem 2.4}
Let $\ell \in \curve^\Lambda(\gamma_0, \gamma_1)$, the associated solution $u$ satisfies $0 \leqslant u \leqslant 1$ almost everywhere in $\mathcal{C}$ and $u \in C^\infty(\mathcal{C}\setminus S_\ell)$.
\end{lemma}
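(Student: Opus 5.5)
The bound $0\leqslant u\leqslant 1$ will follow from a truncation argument based on the uniqueness of the minimizer of $E_\varepsilon^\ell$. Smoothness off $S_\ell$ will then follow from local elliptic regularity, since on that set $u$ solves a linear equation with bounded right-hand side.

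\emph{Step 1 (lower bound).} Set $v:=|u|$. Then $v\in H^1(\C)$, $v=1$ on $\partial\C$, and $|\nabla v|=|\nabla u|$ a.e. We also have $(1-v)^2\leqslant(1-u)^2$ pointwise, because $v\geqslant u$ and $1-v\leqslant 1-u$ with $|1-v|=|1-|u||\leqslant 1+|u|$. Wherever $u<0$ this gives $(1-|u|)^2<(1+|u|)^2=(1-u)^2$. On $S_\ell$ the traces satisfy $v^2=u^2$, since the precise representative commutes with $|\cdot|$. Hence $E_\varepsilon^\ell(v)\leqslant E_\varepsilon^\ell(u)$, so $v$ is also a minimizer. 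By uniqueness $v=u$, that is, $u\geqslant 0$ a.e.

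\emph{Step 2 (upper bound).} Set $w:=\min(u,1)$. Then $w\in H^1(\C)$, $w=1$ on $\partial\C$, and $|\nabla w|\leqslant|\nabla u|$. Moreover $(1-w)^2\leqslant(1-u)^2$, and $w^2\leqslant u^2$ on $S_\ell$ because $u\geqslant 0$ there by Step 1 (again using precise representatives, whose truncation is the truncation of the representative $\mathcal H^2$-a.e.). Thus $E_\varepsilon^\ell(w)\leqslant E_\varepsilon^\ell(u)$, and uniqueness gives $u=w\leqslant 1$ a.e. The only delicate point in Steps 1 and 2 is justifying that the traces on $S_\ell$ behave under these Lipschitz operations. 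This holds because the trace is the precise representative, defined outside an $\mathcal H^2$-null set, and $\mu=\Haus\mres S_\ell$ does not charge that set, as in Lemma~\ref{lem 2.3}.

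\emph{Step 3 (interior smoothness).} Since $S_\ell$ is compact, $\C\setminus S_\ell$ is open. Take a test function $\varphi\in C_c^\infty(\C\setminus S_\ell)$. Then $B[\ell](u,\varphi)=0$, so \eqref{eq Euler Lagrange} gives
\[
-\Delta u+\tfrac{1}{4\varepsilon^2}u=\tfrac{1}{4\varepsilon^2}
\]
in $\mathcal D'(\C\setminus S_\ell)$. We bootstrap as follows. The right-hand side minus the zero-order term lies in $L^\infty_{loc}\subset L^2_{loc}$, so $u\in H^2_{loc}$. Iterating, $u\in H^k_{loc}$ for every $k$, and by Sobolev embedding $u\in C^\infty(\C\setminus S_\ell)$. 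Alternatively, $u-1$ solves $\Delta(u-1)=\frac{1}{4\varepsilon^2}(u-1)$, a constant-coefficient elliptic equation, so it is even real-analytic there. I expect no real obstacle in this step; the main care needed in the whole proof is the trace issue already noted in Steps 1–2.
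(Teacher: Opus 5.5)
Your proof is correct and follows essentially the paper's argument: a truncation combined with uniqueness of the minimizer (the paper does it in one stroke with $t\mapsto\max(0,\min(t,1))$ rather than your two steps), followed by interior elliptic regularity for $-\Delta u=\frac{1}{4\varepsilon^2}(1-u)$ in $\mathcal{D}'(\C\setminus S_\ell)$, for which the paper simply cites \cite[Corollary 8.11]{gilbargelliptic} instead of bootstrapping. Your direct Lebesgue-point justification of the trace inequality on $S_\ell$ is, if anything, cleaner than the paper's detour through $f\circ u^2$.
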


\begin{proof}
We begin by showing that $0 \leqslant u \leqslant 1$ almost everywhere in $\mathcal{C}$. To that aim, we introduce the Lipschitz function $f:t \mapsto \max(0,\min(t,1))$ and we define $v :=f \circ u$, so that $0\leqslant v \leqslant 1$. It is well known that since $u \in 1+H^1_0(\C)$, we also have $v \in 1+H^1_0(\C)$ and moreover $|\nabla v| \leqslant |\nabla u|$ almost everywhere in $\C$. Similarly, $u^2 \in W^{1,1}(\C)$ yields $f\circ u^2 \in W^{1,1}(\C)$. Furthermore, because $v^2 \leqslant f\circ u^2 \leqslant u^2$ almost everywhere in $\C$, we get the inequality on the Lebesgue representative $(v^2)^* \leqslant (u^2)^*$ $\Haus$-almost everywhere and thus 
\[B[\ell](v,v) \leqslant B[\ell](u,u).\]
This leads to $E_\varepsilon^\ell(v) \leqslant E_\varepsilon^\ell(u) $ and the uniqueness of $u$ allows us to conclude that $u=v$ almost everywhere.

Now, notice that $u \in H^1(\C)\cap L^\infty(\C)$ and satisfies the following elliptic equation in $\mathcal{D}'(\C\setminus S_\ell)$
\[-\Delta u = \frac{1}{4\varepsilon^2}(1-u).\]

Thus, the standard elliptic regularity theory for bounded weak solutions (see for instance \cite[Corollary 8.11]{gilbargelliptic}) yields the conclusion that $u \in C^\infty(\C \setminus S_\ell)$.
\end{proof}

Hence, it remains to study the regularity at the boundary of the domain and on the surface $S_\ell$. To this end,  we begin by establishing an estimate for the gradient of $ u$ in both regions. We first address the estimate at the boundary of the domain, which corresponds to being far from the surface $S_\ell \in \overline{\Ccal_0}$. The method used here is standart in boundary elliptic regularity and relies on a barrier function.
In what follows, $C$ will denote a universal constant, that is, a constant depending only on the domains $\Ccal_0$ and $\Ccal$, and the dimension $N=3$.
\begin{lemma}
\label{lem estimation v proche du bord}
Let $\ell \in \curve^\Lambda(\gamma_0, \gamma_1)$, $u$ be the associated solution and $x_0 \in \mathcal{C} \setminus S_\ell$ be such that $\dist(x_0,S_\ell) \geqslant 10 \varepsilon$. Then,

\begin{equation}
    0 \leqslant 1-u(x_0) \leqslant \exp\left(-\frac{C \dist(x_0,S_\ell)}{\varepsilon}\right), \text{ where } C\in (0,1) \text{ is universal}.
\end{equation}
\end{lemma}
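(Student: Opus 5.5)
The plan is to set $w := 1-u$ and compare it with an explicit radial supersolution on a ball that avoids $S_\ell$. By Lemma~\ref{lem 2.4}, $0\leqslant w\leqslant 1$ a.e. in $\C$, which already gives the lower bound $0\leqslant 1-u(x_0)$. Off the surface, Lemma~\ref{lem 2.4} also shows that $w$ is smooth and satisfies
\[-\Delta w + \frac{1}{4\varepsilon^2} w = 0 \quad \text{in } \C\setminus S_\ell.\]
Let $d := \dist(x_0,S_\ell)\geqslant 10\varepsilon$ and $B := B(x_0,d)$, so that $B\cap S_\ell=\emptyset$. The ball $B$ may leave $\C$, so the comparison will be run on $\Omega := B\cap\C$. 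On $\partial\Omega$ we have $w\leqslant 1$ on the part $\partial B\cap\overline{\C}$, and $w = 0$ on the part $\partial\C\cap B$.

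For the barrier, take the radial function
\[\phi(x) := \frac{\cosh(\lambda |x-x_0|/\varepsilon)}{\cosh(\lambda d/\varepsilon)}, \qquad \lambda\in(0,1/2] \text{ to be fixed}.\]
A direct computation in dimension $3$ gives, with $r=|x-x_0|$ and $t=\lambda r/\varepsilon$,
\[\Delta \phi = \frac{\lambda^2}{\varepsilon^2}\,\frac{\cosh t + \tfrac{2}{t}\sinh t}{\cosh(\lambda d/\varepsilon)}.\]
Since $\sinh t/t\leqslant \cosh t$, this yields $\Delta\phi\leqslant 3\lambda^2\varepsilon^{-2}\phi$. Choosing $\lambda^2\leqslant 1/12$ (say $\lambda=1/4$) gives $-\Delta\phi+\frac{1}{4\varepsilon^2}\phi\geqslant 0$ in $B$, so $\phi$ is a supersolution. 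Moreover $\phi = 1\geqslant w$ on $\partial B$, and $\phi\geqslant 0 = w$ on $\partial\C$. The weak maximum principle for the operator $-\Delta+\frac{1}{4\varepsilon^2}$ (zeroth-order coefficient of the right sign) then gives $w\leqslant\phi$ in $\Omega$.

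The step I expect to need the most care is this application of the maximum principle. The function $w-\phi$ is only in $H^1(\Omega)$, with boundary inequalities holding in the trace sense. On $\partial\C$ this is fine since $u\in 1+H^1_0(\C)$. On $\partial B$ it is fine because $w$ is continuous in a neighbourhood of $\partial B\cap\C$: points of $\partial B$ may touch $S_\ell$, but interior points of $B$ do not, so I would first work on $B(x_0,d-\eta)$ and let $\eta\to 0$.

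Finally, evaluating at the centre gives
\[1-u(x_0)=w(x_0)\leqslant \phi(x_0) = \frac{1}{\cosh(\lambda d/\varepsilon)}\leqslant 2e^{-\lambda d/\varepsilon}.\]
To absorb the factor $2$, use $d\geqslant 10\varepsilon$: with $\lambda = 1/4$ and $C := 1/8$, we get
\[2e^{-\lambda d/\varepsilon} = 2e^{-d/(8\varepsilon)}\,e^{-d/(8\varepsilon)}\leqslant 2e^{-10/8}\,e^{-C d/\varepsilon}\leqslant e^{-Cd/\varepsilon},\]
since $2e^{-5/4}<1$. This is the claimed estimate with the universal constant $C=1/8\in(0,1)$.
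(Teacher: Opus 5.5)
Your proof is correct and follows the paper's argument: you compare $1-u$, which solves $-4\varepsilon^2\Delta v+v=0$ on a ball around $x_0$ intersected with $\C$, with an explicit radial supersolution via the maximum principle, using $1-u=0$ on $\partial\C$ and $1-u\leqslant 1$ on the sphere. The only difference is the barrier. The paper uses $\exp\bigl((|x-x_0|^2-R^2)/(8\varepsilon R)\bigr)$ on $B(x_0,R)$ with $R=\frac{9}{10}\dist(x_0,S_\ell)$, so the sphere stays away from $S_\ell$ and $R\geqslant 9\varepsilon$ controls the first-order term, giving $C=9/80$; your $\cosh$ barrier on $B(x_0,d-\eta)$ uses $d\geqslant 10\varepsilon$ only to absorb the factor $2$, giving $C=1/8$.
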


\begin{proof}
We fix $R:= \frac{9}{10}\dist(x_0, S_\ell) \geqslant 9\varepsilon$. Consider the function $v :=1-u$, satisfying $0\leqslant v \leqslant 1$ almost everywhere, and solution of 
\[\begin{cases}
-4\varepsilon^2\Delta v +v = 0 & \text{in } B(x_0,R) \cap \C, \\
v = 0 & \text{on } B(x_0,R) \cap \partial \C.
\end{cases}\]
We then introduce the function $w(x) = \exp \left(\frac{|x-x_0|^2-R^2}{8\varepsilon R}\right)$. We compute the laplacian of this barrier function: 
\[\Delta w(x) = \left(\frac{|x-x_0|^2}{16\varepsilon^2 R^2}+\frac{6}{8R\varepsilon}\right)w(x) \leqslant \left(\frac{1}{16\varepsilon^2 }+\frac{1}{12\varepsilon^2}\right)w(x),\]
which is therefore solution of \[\begin{cases}
-4\varepsilon^2\Delta w +w \geqslant 0 & \text{in } B(x_0,R) \cap \C, \\
w=1 \geqslant 0 & \text{on } \partial B(x_0,R)\cap \C, \\
w \geqslant 0 & \text{on } B(x_0,R) \cap \partial \C.
\end{cases}\]
Thus, since $v$ is smooth outside $S_\ell$ and since $B(x_0,R)\cap \Ccal \subset \Ccal \setminus S_\ell$, the maximum principle (see \emph{e.g.} \cite[Theorem 3.3]{gilbargelliptic}) yields $v\leqslant w$ in $B(x_0,R)\cap \C$. Hence, the desired inequality
\[1-u(x_0) = v(x_0) \leqslant w(x_0) = \exp\left(-\frac{9 \dist(x_0,S_\ell)}{80\varepsilon}\right) =  \exp\left(-\frac{C \dist(x_0,S_\ell)}{\varepsilon}\right),\]
and the proof is complete.
\end{proof}

\begin{lemma}
\label{lem 2.7} There exist constants $\varepsilon_0 >0$ and $C>0$ depending only on $\Ccal$ and $\Ccal_0$ such that if $\varepsilon < \varepsilon_0$, $\ell \in \curve^\Lambda(\gamma_0, \gamma_1)$, $u$ is the associated solution and $x_0 \in \overline{\mathcal{C}} \setminus S_\ell$ satisfies $d(x_0,S_\ell) \geqslant 11 \varepsilon$, then we have the following estimate  

\begin{equation}
   |\nabla u(x_0)| \leqslant \frac{C}{\varepsilon}\exp\left(-\frac{C \dist(x_0,S_\ell)}{\varepsilon}\right).
\end{equation}
\end{lemma}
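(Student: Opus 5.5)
We work with $v := 1-u$. By Lemma~\ref{lem 2.4} it satisfies $0\leqslant v\leqslant 1$, it is smooth away from $S_\ell$, and
\[-4\varepsilon^2\Delta v + v = 0 \quad \text{in } \C\setminus S_\ell, \qquad v=0 \text{ on } \partial\C .\]
The strategy is to rescale by $\varepsilon$, so that the equation becomes $-4\Delta \tilde v+\tilde v=0$ with unit-size coefficients. We then use standard gradient estimates for bounded solutions, interior or up to the flat-ish boundary. The sup-norm input comes from Lemma~\ref{lem estimation v proche du bord}, applied at every point of a neighbourhood of size $\varepsilon$ around $x_0$.

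\emph{Step 1 (sup bound near $x_0$).} Set $d:=\dist(x_0,S_\ell)\geqslant 11\varepsilon$. Every $y\in B(x_0,\varepsilon)\cap\C$ satisfies $\dist(y,S_\ell)\geqslant d-\varepsilon\geqslant 10\varepsilon$. Lemma~\ref{lem estimation v proche du bord} therefore gives
\[0\leqslant v(y)\leqslant \exp\bigl(-C(d-\varepsilon)/\varepsilon\bigr)\leqslant e^{C}\exp(-Cd/\varepsilon)\leqslant C'\exp(-Cd/\varepsilon).\]
Since $C\in(0,1)$, the factor $e^{C}$ is a universal constant. If $x_0\in\partial\C$, the bound holds trivially on $\partial\C$ because $v=0$ there, and for interior points we apply the lemma directly.

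\emph{Step 2 (rescaling and gradient estimate).} Define $\tilde v(z):=v(x_0+\varepsilon z)$ on $\Omega_\varepsilon:=(\C-x_0)/\varepsilon$. On $B(0,1)\cap\Omega_\varepsilon$ it solves $-4\Delta\tilde v+\tilde v=0$, vanishes on the boundary portion, and satisfies $\|\tilde v\|_\infty\leqslant M:=C'\exp(-Cd/\varepsilon)$. We split into two cases.

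If $\dist(x_0,\partial\C)\geqslant\varepsilon/2$, interior estimates on $B(0,1/2)$ (e.g. \cite[Theorem 3.9]{gilbargelliptic} or Schauder/$W^{2,p}$ estimates with the zeroth-order term moved to the right-hand side) give $|\nabla\tilde v(0)|\leqslant C M$.

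Otherwise we use boundary estimates. $\partial\C$ is smooth and compact, so once $\varepsilon<\varepsilon_0$ the rescaled boundary $\partial\Omega_\varepsilon\cap B(0,2)$ is a $C^{2}$ graph with $C^2$ norm bounded by $C\varepsilon\leqslant C$, uniformly in $\varepsilon$. This is where $\varepsilon_0$, depending only on $\C$, enters. Global/boundary $W^{2,p}$ estimates for $p>3$ with zero Dirichlet data (\cite[Theorem 9.13]{gilbargelliptic}), combined with Morrey's embedding, then give $|\nabla \tilde v|\leqslant CM$ on $B(0,1/2)\cap\overline{\Omega_\varepsilon}$. The constants are uniform because the flattened domains have uniformly bounded geometry. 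We also need $B(x_0,\varepsilon)\cap S_\ell=\emptyset$, which holds since $d\geqslant 11\varepsilon$, so the equation holds on the whole rescaled patch.

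\emph{Step 3 (scaling back).} Since $\nabla u(x_0)=-\nabla v(x_0)=-\varepsilon^{-1}\nabla\tilde v(0)$, we get
\[|\nabla u(x_0)|\leqslant \frac{C}{\varepsilon}\exp\bigl(-Cd/\varepsilon\bigr),\]
after merging constants (take the minimum of the exponential constants and the maximum of the prefactors).

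The main obstacle is Step 2 at the boundary: we must make the boundary regularity constants uniform in $\varepsilon$ after rescaling. This is handled by noting that dilation by $1/\varepsilon$ only flattens the smooth boundary of $\C$, so for $\varepsilon<\varepsilon_0$ a single family of boundary charts with uniformly controlled norms serves all $x_0$.
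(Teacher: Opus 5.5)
Your proof is correct. Your interior case coincides with the paper's Step~1: rescale by $\varepsilon$, feed the barrier bound of Lemma~\ref{lem estimation v proche du bord} on $B(x_0,\varepsilon)$ into the interior gradient estimate \cite[Theorem 3.9]{gilbargelliptic}, and scale back.

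Near $\partial\C$ you take a genuinely different route. The paper does not rescale there. It introduces an intermediate smooth set $\C_1$ with $\overline{\C_0}\subset\C_1$ and $\overline{\C_1}\subset\C$, and works on the fixed collar $U=\C\setminus\overline{\C_1}$. On $U$ it bounds $v$ in $L^\infty$ by $e^{-C\eta_0/(4\varepsilon)}$ and controls $v$ in $C^1(\partial\C_1)$ by Step~1. It then applies the global estimate \cite[Theorem 8.33]{gilbargelliptic} on $U$, whose constant depends only on $U$. Working at unit scale costs a factor $\varepsilon^{-2}$, coming from the zeroth-order term $v/(4\varepsilon^2)$. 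The paper absorbs it into half of the exponential, then rewrites $e^{-C\eta_0/(8\varepsilon)}$ as $e^{-C'\dist(x_0,S_\ell)/\varepsilon}$ using $\dist(x_0,S_\ell)\leqslant \diam(\C)$, which degrades the rate by a factor of order $\eta_0/\diam(\C)$. This only works because $S_\ell\subset\overline{\C_0}$ stays at distance at least $\eta_0$ from $\partial\C$.

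Your approach rescales at boundary points as well and uses local boundary $W^{2,p}$ estimates \cite[Theorem 9.13]{gilbargelliptic} plus Morrey's embedding on the dilated domains $(\C-x_0)/\varepsilon$. This gives a uniform treatment with no polynomial loss. It keeps the same exponential rate as Lemma~\ref{lem estimation v proche du bord}, and it never uses the separation between $S_\ell$ and $\partial\C$.

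The price is two points you should make explicit. First, the constants of the boundary estimate must be uniform over this family of domains. They depend only on a $C^{1,1}$ bound for the boundary charts, which improves under dilation once you centre the chart at the boundary point nearest to $x_0$. Note that only the second-derivative part of the graph scales like $\varepsilon$, so your phrase ``$C^2$ norm bounded by $C\varepsilon$'' is loose, although the uniform bound you actually need holds. Second, $v$ must lie in $W^{2,p}$ up to $\partial\C$ near $x_0$ for Theorem~9.13 to apply. This follows from standard boundary regularity, since $\partial\C$ is smooth, $v=0$ there, and $\Delta v\in L^\infty$ near $x_0$.

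The paper sidesteps the uniformity issue entirely by working on a single fixed domain.
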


\begin{proof}
\emph{Step 1. (Interior estimate)}
Assume that $B(x_0,\varepsilon) \subset \C$. We define, for all $x \in B_1 := B(0,1)$ the function $w_\varepsilon(x) :=1-u(x_0+\varepsilon x)$. This function satisfies 
\begin{equation}
    \label{eq lem 2.7 step 1}
    \Delta w_\varepsilon = -\varepsilon^2 \Delta u(x_0+\varepsilon x) = \frac{1}{4}w_\varepsilon.
\end{equation}
In the following, $C>0$ stands for a universal constant that may vary from one line to another.
The assumption $\dist(x_0,S_\ell)\geqslant 11 \varepsilon$ leads to $\dist(x_0+\varepsilon x,S_\ell) > 10 \varepsilon$, for all $x\in B_1$. Thus, the previous Lemma yields
\begin{align*}
    0\leqslant w_\varepsilon(x) &\leqslant \exp\left(-\frac{C \dist(x_0+\varepsilon x, S_\ell)}{\varepsilon}\right) \leqslant C \exp\left(-\frac{C \dist(x_0, S_\ell)}{\varepsilon}\right).
\end{align*}

Since $w_\varepsilon$ solves \eqref{eq lem 2.7 step 1} \cite[Theorem 3.9]{gilbargelliptic} implies that, for all $x\in B_1$,
\[|\nabla w_\varepsilon(x)|\dist(x,\partial B_1) \leqslant C(\|w_\varepsilon \|_{L^\infty(B_1)} + \sup_{y \in B_1} \dist(y,\partial B_1)^2|\frac{1}{4}w_\varepsilon(y)|)\leqslant C\|w_\varepsilon \|_{L^\infty(B_1)}. \]
In particular, for $x\in B_{1/2}$
\[|\nabla w_\varepsilon(x)| \leqslant C\|w_\varepsilon \|_{L^\infty(B_1)}. \]
Hence, for all $x\in B_{1/2}$
\[|\nabla w_\varepsilon(x)| \leqslant C\exp\left(-\frac{C \dist(x_0, S_\ell)}{\varepsilon}\right).\]
Noticing that $|\nabla w_\varepsilon(0)| = \varepsilon |\nabla u (x_0)|$ allows us to conclude that
\[ |\nabla u(x_0)| \leqslant \frac{C} {\varepsilon}\exp\left(-\frac{C \dist(x_0,S_\ell)}{\varepsilon}\right).\]

\emph{Step 2. (Boundary estimate)} We denote by $\eta_0 := \min\{\dist(x,\partial \mathcal{C}) \text{ for } x\in \mathcal{C}_0\} >0$, and consider a smooth intermediary open set $\mathcal{C}_1$ such that 
\[\overline{\Ccal_0}\subset \Ccal_1 \text{, } \overline{\Ccal_1}\subset \Ccal\text{ and } \min(\dist(\partial \Ccal_1, \partial \Ccal_0),\dist(\partial \Ccal_1,\partial \Ccal)) \geqslant \eta_0/4.\]
We denote by $U$ the open set $U := \Ccal \setminus \overline{\Ccal_1}$ and by $v$ the function $v :=1-u$, solution of 
\[\begin{cases}
\Delta v = \frac{1}{4\varepsilon^2}v & \text{in } U, \\
v = 0 & \text{on } \partial \C.
\end{cases}\]
Assume that $\varepsilon$ is small enough so that $10\varepsilon \leqslant \eta_0/4$. This means that all $x_0 \in \overline{U}$ are at distance at least $\eta_0/4  \geqslant 10\varepsilon$ from $S_\ell$. Therefore, for all $x_0 \in \overline{U}$ Lemma~\ref{lem estimation v proche du bord} yields 
\[0 \leqslant v(x_0) \leqslant C  \exp\left(-\frac{C \dist(x_0,S_\ell)}{\varepsilon}\right) \leqslant  C \exp\left( \frac{-C \eta_0}{4 \varepsilon}\right).\]
Indeed, for $x\in \partial \Ccal$ we have the boundary condition $v(x)=0$. The estimate of Lemma~\ref{lem estimation v proche du bord} can therefore be extended to $\overline{\Ccal}\setminus S_\ell$. Thus, we have a bound on the $L^\infty$ norm of $v$ in $\overline{U}$:
\begin{equation}
    \| v\|_{L^\infty(\overline{U})} \leqslant  C \exp\left( \frac{-C \eta_0}{4 \varepsilon}\right).
\end{equation}

Then, since $\dist(\partial \Ccal_1, \partial \Ccal)$ is assume to be larger than $\eta_0/4 \geqslant 10 \varepsilon > \varepsilon$, any point $x_0 \in \partial \Ccal_1$ is an interior point in the sense of \emph{Step 1}. Thus, \emph{Step 1} yields 
\[ |\nabla v(x_0)| = |\nabla u(x_0)| \leqslant \frac{C}{\varepsilon} \exp\left( \frac{-C \dist(x_0,S_\ell)}{ \varepsilon}\right) \leqslant  \frac{C}{\varepsilon} \exp\left( \frac{-C \eta_0}{4 \varepsilon}\right). \]
Hence, 
\begin{equation}
    \| \nabla v\|_{L^\infty( \partial \Ccal_1)} \leqslant   \frac{C}{\varepsilon} \exp\left( \frac{-C \eta_0}{4 \varepsilon}\right).
\end{equation}

Again, we can assume that $\varepsilon$ is small enough so that $\varepsilon <1$ and thus $1 < \frac{1}{\varepsilon}$. The previous estimates lead then to an estimate on the $C^1$ norm of $v$ on $\partial \Ccal_1$:
\begin{equation}
    \| v\|_{C^1(\partial \Ccal_1)} \leqslant   \frac{C}{\varepsilon} \exp\left( \frac{-C \eta_0}{4 \varepsilon}\right).
\end{equation}

Finally, because the boundary $\partial U = \partial \Ccal \cup \partial \Ccal_1$ is smooth (since both $\Ccal$ and $\Ccal_1$ are assumed to be smooth domains) and $v$ is a solution of a Poisson problem on $U$, the classical regularity theory for solution of Poisson problem (see for instance \cite[Theorem~8.33]{gilbargelliptic}) yields the existence of a constant $C>0$ depending only on $U$ \emph{i.e.} $C = C(\eta_0,\Ccal,\Ccal_0)$ such that 
\begin{align*}
     \|\nabla v\|_{L^\infty(\overline{U})} \leqslant \|v\|_{C^1(\overline{U})} &\leqslant C(\|v\|_{L^\infty(\overline{U})} + \| v\|_{C^1(\partial \Ccal_1)} + \frac{1}{\varepsilon^2}\|v\|_{L^\infty(U)}),\\
     &\leqslant C (1 + \frac{1}{\varepsilon} + \frac{1}{\varepsilon^2})\exp\left( \frac{-C \eta_0}{4 \varepsilon}\right),\\
       &\leqslant \frac{C}{\varepsilon^2}\exp\left( \frac{-C\eta_0}{4 \varepsilon}\right).
\end{align*}

Then, up to taking $\varepsilon$ even smaller (with respect to $\eta_0$), we can assume that $\frac{1}{\varepsilon}\exp\left( \frac{-C \eta_0}{8 \varepsilon}\right) \leqslant 1$. Therefore, we get
\begin{align*}
     \|\nabla v\|_{L^\infty(\overline{U})} \leqslant \frac{C}{\varepsilon}\exp\left( \frac{-C \eta_0}{8\varepsilon}\right).
\end{align*}
Notice that, for $x_0 \in \overline{U}$, $\dist(x_0,S_\ell) \leqslant \diam(\Ccal) = \frac{\diam(\Ccal)}{\eta_0}\eta_0$. Hence, for all $x_0 \in \overline{U}$, we get the desired estimate on the gradient: 
\begin{align*}
     |\nabla v(x_0)| &\leqslant \|\nabla v\|_{L^\infty(\overline{U})} 
      \leqslant  \frac{C}{\varepsilon} \exp\left( \frac{-C \eta_0}{8\varepsilon}\right) \leqslant \frac{ C }{\varepsilon} \exp\left( \frac{-C\eta_0}{8 \diam(\Ccal)}\frac{\dist(x_0,S_\ell)}{ \varepsilon}\right),\\
      &= \frac{ C}{\varepsilon} \exp\left( \frac{-C \dist(x_0,S_\ell)}{ \varepsilon}\right),
\end{align*}
which achieves the proof of this Lemma.
\end{proof}

We now turn to the gradient estimate on the surface $S_\ell$. 

\begin{lemma}
\label{lem 2.10}
Recall the definition of the parameter $\eta_0 = \dist(\partial \Ccal_0,\partial \Ccal)$ introduced in the previous Lemma.
Let $\ell \in \curve^\Lambda(\gamma_0, \gamma_1)$, $0< \rho < \min(1,\frac{\eta_0}{4})$ and $x_0 \in \mathcal{C}$ such that $\dist(x_0,\partial \mathcal{C}) \geqslant \frac{\eta_0}{2} $. Then, $u \in W^{1,p}(B(x_0,\rho))$ for all $3\leqslant p < \infty$ and we have the following estimate on the rescaled function $u_\rho (x) := u(x_0+\rho x)$
\begin{equation}
    \|\nabla u_\rho\|_{L^p(B_1)} \leqslant C_p\left( \frac{\rho^2}{\varepsilon^2}+1+\frac{\Lambda \rho}{c_\varepsilon \varepsilon}\right).
\end{equation}
\end{lemma}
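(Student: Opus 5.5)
Rescaling brings the Euler--Lagrange equation \eqref{eq Euler Lagrange} to the unit ball, where I would treat the surface term as a measure-valued right-hand side and apply $W^{1,p}$ estimates for the Laplacian with measure data controlled in a negative Sobolev norm. Set $u_\rho(x)=u(x_0+\rho x)$ on $B_2$; this is admissible since $\rho<\eta_0/4$ and $\dist(x_0,\partial\Ccal)\geqslant \eta_0/2$ give $B(x_0,2\rho)\subset\Ccal$. From \eqref{eq Euler Lagrange}, $u_\rho$ solves in $\mathcal{D}'(B_2)$
\[
-\Delta u_\rho = \frac{\rho^2}{4\varepsilon^2}(1-u_\rho) - \frac{\rho^2}{\varepsilon c_\varepsilon}\, u\,\Haus\mres S_\ell \text{ (pushed forward)} =: f_\rho - \mu_\rho ,
\]
where for $\varphi\in C^\infty_c(B_2)$ we have $\langle \mu_\rho,\varphi\rangle = \frac{\rho^2}{\varepsilon c_\varepsilon}\int_{S_\ell} u(y)\varphi((y-x_0)/\rho)\,d\Haus(y)$. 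Changing variables, $\mu_\rho = \frac{\rho}{\varepsilon c_\varepsilon}\, u_\rho\, \Haus\mres S_\rho$ with $S_\rho=(S_\ell-x_0)/\rho$. The set $S_\rho$ is still $\Lambda$-upper Ahlfors regular, since the condition is scale invariant for $\Haus$. By Lemma~\ref{lem 2.4} we have $0\leqslant u_\rho\leqslant 1$, so $\mu_\rho$ is a nonnegative measure with $\mu_\rho(B(x,r))\leqslant \frac{4\pi\Lambda\rho}{\varepsilon c_\varepsilon} r^2$ for all $x,r$, and $|f_\rho|\leqslant \rho^2/(4\varepsilon^2)$.

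Next I would split $u_\rho = h + w_1 + w_2$ on $B_{3/2}$ using a cutoff $\chi$ equal to $1$ on $B_{3/2}$ and supported in $B_2$. Take $w_1 = \Gamma * (\chi f_\rho)$ and $w_2 = -\Gamma*(\chi\mu_\rho)$, with $\Gamma$ the fundamental solution in $\R^3$; then $h$ is harmonic on $B_{3/2}$. The three pieces are estimated as follows.
\begin{itemize}
\item For $w_1$, Calderón--Zygmund (or simply $|\nabla\Gamma|\in L^1_{loc}$) gives $\|\nabla w_1\|_{L^\infty(B_2)}\leqslant C\rho^2/\varepsilon^2$.
\item For $w_2$, the key point is that $|\nabla\Gamma*\nu|(x)\leqslant C\int |x-y|^{-2}\,d\nu(y)$, and a measure with $\nu(B(x,r))\leqslant M r^2$ of total mass $\leqslant CM$ has Riesz potential $I_1\nu = |\cdot|^{-2}*\nu$ in $L^p(B_2)$ for every $p<\infty$ (only weak-type $L^\infty$, logarithmic). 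To see this, use the layer-cake estimate $\int |x-y|^{-2}d\nu(y)\leqslant \sum_k 4^k\nu(B(x,2^{-k}))$. More cleanly, I would use Adams' trace theorem / duality: $\nu\in W^{-1,p}$ because $\int\varphi\,d\nu\leqslant CM\|\varphi\|_{W^{1,p'}}$ for $p'>1$ close to $1$. Indeed, for $p'>1$ the trace of $W^{1,p'}$ on a $2$-dimensional upper-Ahlfors-regular measure is bounded (Adams: need $2> 3-p'$, i.e.\ $p'>1$, which holds). This mirrors Lemma~\ref{lem 2.3} but with exponent $p'$ rather than $1$. Hence $\|\nabla w_2\|_{L^p}\leqslant C_p\Lambda\rho/(\varepsilon c_\varepsilon)$.
\item For $h$, interior estimates for harmonic functions give $\|\nabla h\|_{L^\infty(B_1)}\leqslant C\|h\|_{L^1(B_{3/2})}$. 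Here $\|h\|_{L^1}\leqslant \|u_\rho\|_{L^1}+\|w_1\|_{L^1}+\|w_2\|_{L^1}\leqslant C(1+\rho^2/\varepsilon^2+\Lambda\rho/(\varepsilon c_\varepsilon))$, using $0\leqslant u_\rho\leqslant1$ and the bounds above on the potentials themselves.
\end{itemize}
Summing these bounds yields the estimate, and undoing the scaling shows $u\in W^{1,p}(B(x_0,\rho))$.

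The main obstacle is the $w_2$ estimate: proving that a $\Lambda$-upper Ahlfors regular $2$-dimensional measure lies in $W^{-1,p}$ for every $p<\infty$ with norm $C_p\Lambda$, via a trace inequality of Adams type. The restriction $p\geqslant 3$ in the statement is presumably just to apply Morrey afterwards. If citing Adams is undesirable, I would prove the bound directly. Split $|x-y|^{-2}$ dyadically and use the growth bound to get $I_1\nu(x)\leqslant CM(1+\log^+(1/\dist(x,\operatorname{supp}\nu)))$. Then integrate over $B_2$, noting that $|\{x\in B_2:\dist(x,S_\rho)<t\}|\leqslant C\Lambda t$ by a covering argument using the upper Ahlfors bound together with the total mass of $S_\rho\cap B_3$; this gives all $L^p$ norms. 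A Vitali covering needs a lower bound on the number of balls, which uses upper regularity on the total mass only, so that step needs care.
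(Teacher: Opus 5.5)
Your argument is correct and is essentially the paper's proof. Both show that the rescaled surface term lies in $W^{-1,p}(B_2)$ with norm $\lesssim \Lambda\rho/(\varepsilon c_\varepsilon)$, subtract a $W^{1,p}$ particular solution, and handle the remainder by interior elliptic estimates; the paper builds that particular solution from $T_\rho=\mathrm{div}\, f$ via a Dirichlet problem on $B_2$ and then needs Morrey's embedding ($p>3$) for an $L^\infty$ bound, whereas your Newtonian potential only needs the $L^1$ bound coming from the total mass (also, before changing variables the prefactor of $\mu_\rho$ is $1/(\rho\varepsilon c_\varepsilon)$, not $\rho^2/(\varepsilon c_\varepsilon)$, though your final $\mu_\rho$ is right), and the step you call the main obstacle needs no Adams-type theorem, since on $B_2$ Hölder gives $\|\varphi\|_{W^{1,1}(B_2)}\leqslant C_p\|\varphi\|_{W^{1,p'}(B_2)}$, so the $W^{1,1}$ trace bound \eqref{eq lem 2.3} of Lemma~\ref{lem 2.3} already yields the $W^{-1,p}$ estimate for every $p$ (exactly the paper's Step~1), while your fallback via $\log(1/\dist)$ should be dropped, because an upper density bound cannot control the number of disjoint $t$-balls centred on the support.
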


\begin{proof}
\emph{Step 1.}
We introduce the distribution $T_\rho$ defined by
\[\langle T_\rho, \varphi \rangle = B[\ell](u,\varphi_\rho) = \int_{S_\ell}u \varphi_\rho \dm \Haus,\]
where $\varphi_\rho(x) := \varphi\left(\frac{x-x_0}{\rho}\right)$ and $\varphi \in C^\infty_c(\R^3)$. Let $\varphi \in C^\infty_c(B_ 2)$ then, by definition, $\varphi_\rho \in C^\infty_c(B(x_0,2\rho))$ and Lemma~\ref{lem 2.4} yield
\begin{align*}
    |\langle T_\rho, \varphi \rangle| &\leqslant \int_{S_\ell}|u_\ell||\varphi_\rho|\dm \Haus \leqslant \int_{S_\ell}|\varphi_\rho|\dm \Haus.
\end{align*}
Hence, with \eqref{eq lem 2.3} we obtain
\[|\langle T_\rho, \varphi \rangle| \leqslant C\Lambda \left(\int_{B(x_0,2\rho)}|\nabla \varphi_\rho|\dm x + \|\varphi_\rho\|_{L^1(B(x_0,2\rho))} \right) = C\Lambda \left( \rho^2\int_{B_2}|\nabla \varphi|\dm x + \rho^3\|\varphi\|_{L^1(B_2)} \right).\]
Thus, applying the Hölder inequality leads, for all $1\leqslant q \leqslant 3$, to $T_\rho \in W^{-1,p}(B_2) $. And since $\rho <1$ we have the estimate \[\|T_\rho\|_{W^{-1,p}(B_2)} \leqslant C\Lambda \rho^2.\]

\emph{Step 2.} The assumptions that $\dist(x_0,\partial \C) \geqslant \frac{\eta_0}{2}$ and $\rho < \frac{\eta_0}{4}$ yield $B(x_0,2\rho)\subset \C$. We now distinguish two cases whether $4\leqslant p < \infty$ or $3 \leqslant p <4$.

\emph{Case 1.} Assume that $4\leqslant p < \infty$. Since, $T_\rho \in W^{-1,p}(B_2)$ there exists a function $f\in L^p(B_2,\R^3)$ such that $\Div f = T_\rho$ in $\mathcal{D}'(B_2)$ (see \emph{e.g.} \cite[Sections 3.7 to 3.14]{adams2003sobolev}) such that 
\[C_p^{-1}\|T_\rho\|_{W^{-1,p}(B_2)} \leqslant \|f\|_{L^p(B_2,\R^3)} \leqslant C_p\|T_\rho\|_{W^{-1,p}(B_2)} .\] 
Thus \emph{Step 1} leads to $\|f\|_{L^p(B_2,\R^3)} \leqslant C_p \Lambda \rho^2$. Moreover, by classical elliptic theory (see for instance \cite[Theorem 19.15 and Lemma 19.17]{gilbargelliptic}) there exists a unique $\xi \in W^{2,p}(B_2,\R^3)\cap W^{1,p}_0(B_2,\R^3)$ solution of 
\[\begin{cases}
-\Delta \xi = f &\text{in } B_2,\\
\xi = 0 &\text{on } \partial B_2,
\end{cases}\]
such that $\|\xi \|_{W^{2,p}(B_2,\R^3)} \leqslant C_p \|f\|_{L^p(B_2,\R^3)}\leqslant C_p \Lambda \rho^2$.
And we introduce $v_\rho := \Div \xi \in W^{1,p}(B_2)$, which therefore satisfies in the distributional sense 
\[-\Delta v_\rho =\mathrm{div} f = T_\rho, \text{ and } \|v_\rho\|_{W^{1,p}(B_2)} \leqslant C_p \|T_\rho\|_{W^{-1,p}(B_2)} \leqslant C_p \Lambda\rho^2.\]
Thus, Sobolev embedding Theorem (see \emph{e.g.} \cite[Theorem 4.12]{adams2003sobolev}) yields $v_\rho \in L^\infty(B_2)$ with $\|v_\rho\|_{L^\infty(B_2)}\leqslant C_p \Lambda \rho^2$, since $3<p$.

\emph{Case 2.} Otherwise, if $3\leqslant p <4$, we define $v_\rho$ with $p=4$ so that the same argument can be applied. Namely, $v_\rho \in W^{1,4}(B_2)\cap L^\infty(B_2) \subset W^{1,p}(B_2)\cap L^\infty(B_2)$ with the same estimates $\|v_\rho\|_{W^{1,p}(B_2)} \leqslant \|v_\rho\|_{W^{1,4}(B_2)} \leqslant C_p \Lambda\rho^2$ and $\|v_\rho\|_{L^\infty(B_2)}\leqslant C \Lambda \rho^2$.

\medskip
\emph{Step 3.} We define the rescaled function $u_\rho$ on $B_2$ by $u_\rho(x) := u(x_0+\rho x)$. Then, for all $\varphi \in C^\infty_c(B_2)$ 
\begin{align*}
    \int_{B_2}\nabla u_\rho \cdot \nabla\varphi \dm x &= \frac{1}{\rho}\int_{B(x_0,2\rho)}\nabla u \cdot \nabla\varphi_\rho \dm x,\\
    &= \frac{1}{\rho4\varepsilon^2}\int_{B(x_0,2\rho)}(1- u)\varphi_\rho \dm x - \frac{1}{\rho c_\varepsilon \varepsilon}B[\ell](u,\varphi_\rho),\\
    &= \frac{\rho^2}{4\varepsilon^2}\int_{B_2}(1- u_\rho)\varphi \dm x - \frac{1}{\rho c_\varepsilon \varepsilon}\langle T_\rho,\varphi \rangle.
\end{align*}
Hence, $u_\rho$ satisfies the following PDE in $\mathcal{D}'(B_2)$
\[-\Delta u_\rho =  \frac{\rho^2}{4\varepsilon^2}(1- u_\rho) - \frac{1}{\rho c_\varepsilon \varepsilon} T_\rho.\]
We denote $w_\rho := u_\rho + \frac{1}{\rho c_\varepsilon \varepsilon}v_\rho \in H^1(B_2)\cap L^\infty(B_2)$. And we deduce that 
\[-\Delta w_\rho = \frac{\rho^2}{4\varepsilon^2}(1- u_\rho) \text{ in } \mathcal{D}'(B_2). \]
Thus, \cite[Corollary 8.36]{gilbargelliptic} yields that $w_\rho \in C_{\mathrm{loc}}^{1,\alpha}(B_2)$ for all $\alpha >0$ and 
\begin{align*}
    \|\nabla w_\rho\|_{L^\infty(B_1)} \leqslant \|w_\rho\|_{C^{1,\alpha}(B_1)} &\leqslant C\left(\| w_\rho\|_{L^\infty(B_1)} +\frac{\rho^2}{4\varepsilon^2}\|1- u_\rho\|_{L^\infty(B_1)}\right),\\
    &= C\left(\| u_\rho\|_{L^\infty(B_1)}+ \frac{1}{\rho c_\varepsilon \varepsilon}\| v_\rho\|_{L^\infty(B_1)} +\frac{\rho^2}{4\varepsilon^2}\right),\\
    &\leqslant C\left(1 + \frac{\Lambda \rho}{c_\varepsilon \varepsilon} + \frac{\rho^2}{4\varepsilon^2}\right).
\end{align*}
Finally, we can conclude that $u_\rho = w_\rho - \frac{1}{\rho c_\varepsilon \varepsilon}v_\rho \in W^{1,p}(B_1)$ with the desired estimate
\[ \|\nabla u_\rho\|_{L^p(B_1)} \leqslant C_p\left( \frac{\rho^2}{\varepsilon^2}+1+\frac{\Lambda \rho}{c_\varepsilon \varepsilon}\right).\]
This concludes the proof of this Lemma.
\end{proof}

Consequently, we can establish the Hölder regularity of the solution $u$ everywhere in $\overline{\Ccal}$.

\begin{proof}[Proof of Proposition~\ref{prop holder}]
We can assume without loss of generality that $11\varepsilon < \frac{\eta_0}{4}$, where we recall that $\eta_0 = \dist(\partial \C, \partial \C_0)$. Let $x,y \in \mathcal{C}$, we define $x_0 = \frac{x+y}{2} \in \Ccal$, since $\Ccal$ is assumed to be convex.
\begin{itemize}
    \item If $|x-y|\geqslant \varepsilon$, then we directly get 
    \[\frac{|u(x)-u(y)|}{|x-y|^\alpha} \leqslant \frac{2}{\varepsilon^\alpha},\]
    since from Lemma~\ref{lem 2.4} $\|u\|_{L^\infty} \leqslant 1$.
    \item Otherwise, if $|x-y|< \varepsilon$, we distinguish whether $x_0$ is closer to $\partial \mathcal{C}$ or $\partial \mathcal{C}_0$. First, assume that $\dist(x_0, \partial \mathcal{C}) \leqslant \frac{\eta_0}{2}$. From the Definition of $\eta_0 = \dist(\partial \Ccal, \partial \Ccal_0)$ and the assumption that $\varepsilon < 11 \varepsilon < \frac{\eta_0}{4}$ it yields that for all $z\in B(x_0,\varepsilon)$, $\dist(z,S_\ell) > \frac{\eta_0}{4} > 11 \varepsilon$. Thus, for any $z \in B(x_0,\varepsilon)\cap \overline{\C}$ Lemma~\ref{lem 2.7} yields 
        \[|\nabla u (z)| \leqslant\frac{C}{\varepsilon} \exp \left(-\frac{C \dist(z,S_\ell)}{\varepsilon}\right) < \frac{C}{\varepsilon} \exp \left(-11C\right) = \frac{C}{\varepsilon}.\]
    And thanks to the mean value Theorem, we can conclude that 
    \[\frac{|u(x)-u(y)|}{|x-y|^\alpha} \leqslant \frac{C}{\varepsilon^\alpha}.\]
    
    Now let's consider the case where $\dist(x_0, \partial \mathcal{C}) > \frac{\eta_0}{2}> \varepsilon$, in particular this implies that $B(x_0,\varepsilon) \subset \C$. In this case, Lemma~\ref{lem 2.10}, applied with $\rho = \varepsilon$ and $p = \frac{3}{1-\alpha}$ yields the estimate on the rescaled function $u_\varepsilon$
    \[\|\nabla u_\varepsilon\|_{L^p(B_1)} \leqslant C_p\left(1+ \frac{\Lambda}{c_\varepsilon}\right).\]
    Since we have shown that the $L^\infty$ norm of $u$ is smaller than $1$, we deduce that the $L^p$ norm of the scaled function $u_\varepsilon$ is bounded by the volume of the domain. And we conclude by applying the Sobolev embedding Theorem (see for instance \cite[Theorem 4.12]{adams2003sobolev}),
    \begin{equation}
        \|u_\varepsilon\|_{C^{0,\alpha}(B_1)} \leqslant C_\alpha\left(1+ \frac{\Lambda}{c_\varepsilon}\right).
    \end{equation}
    Hence, scaling back leads to 
    \begin{equation}
        \frac{|u(x)-u(y)|}{|x-y|^\alpha} \leqslant \frac{1}{\varepsilon^\alpha}  \|u_\varepsilon\|_{C^{0,\alpha}(B_1)}  \leqslant \frac{C_\alpha}{\varepsilon^\alpha}\left(1+ \frac{\Lambda}{c_\varepsilon}\right).
    \end{equation}
\end{itemize}
Finally, since $\|u\|_{L^\infty(\Ccal)} \leqslant 1$, we get the desired estimate on the Hölder norm of $u$ 
\[ \|u\|_{C^{0,\alpha}(\mathcal{C})} \leqslant C_\alpha \frac{1+\Lambda c_\varepsilon ^{-1}}{\epsilon^{\alpha}}.\]
This achieves the proof of the Hölder regularity of the solution $u$ of $\min E_\eps(u,\ell)$, when $\ell$ is fixed in $\curve^\Lambda$.
\end{proof}

\section{Existence of a minimizer \texorpdfstring{$\ell$}{l} in \texorpdfstring{$\curve^\Lambda$}{Hom lambda} for \texorpdfstring{$u$}{u} fixed}
\label{section u fixe}

Let $u\in H^1(\Ccal)\cap C(\overline{\Ccal})$ be fixed. This section is devoted to the study of the following problem:
	\begin{equation}
\inf_{\ell \in \curve^\Lambda(\gamma_0,\gamma_1)} \int_{\surf_{\ell}} (u^2 + \delta_\eps) d\Haus.  \label{probMin}
	\end{equation}

Notice that, since $u$ is fixed, minimizing the energy $E_\eps(u,\cdot)$ is equivalent to solving the problem~\eqref{probMin}.

As usual, the existence result requires both the lower semicontinuity of the functional and some compactness property. Here, we consider uniformly Lipschitz functions, so compactness is ensured by the Ascoli–Arzelà theorem. Therefore, the main task is to prove lower semicontinuity. This follows from Lemma~\ref{golab}, which establishes the lower semicontinuity of the $\Haus$-measure for surfaces defined as the images of Lipschitz maps. This lemma can be viewed as a generalization of Go\l{}\k{a}b’s theorem in dimension $2$, under the assumption that the surfaces are parametrized by Lipschitz maps.

\begin{proof}[Proof of Lemma~\ref{golab}]
We consider the sequence of measures $\mu_n:= \mathcal{H}^2\mres {S_{\ell_n}}$. Since the sequence $(\ell_n)$ is uniformly Lipschitz, the Lipschitz constants of $\ell_n$ are uniformly bounded by a constant, denoted by $\Lambda$. Therefore, \[\Haus(S_{\ell_n})\leqslant \Lip(\ell_n)^2 \Haus([0,1]\times \mathbb{S}^1) \leqslant \Lambda^2 \pi,\] which yields the uniform boundedness of the measures $\mu_n$. Thus, the sequence $(\mu_n)$  weakly converges to a measure $\mu$. Thanks to the lower-semi-continuity behavior with respect to the weak convergence of measures we know that, for all open set $A$
$$\mu(A) \leqslant \liminf_{n \to + \infty} \mu_n(A). $$

Therefore, it remains to prove that
\begin{eqnarray}
\mathcal{H}^2\mres {S_{\ell}} \leqslant \mu . \label{claim}
\end{eqnarray}

We will prove later that the 2-dimensional density of $\mu$ at almost every point in $S_\ell$ is larger than 1. Consequently, a classical fact of Geometric Measure Theory (see for instance \cite[Proposition 2.21]{AFP00}) yields that for any Borel set $A\subset S_\ell$, $\mathcal{H}^2\mres {S_{\ell}}(A) = \mathcal{H}^2(A) \leqslant \mu(A)$. Thus, for any Borel set $B$, $B\cap S_\ell$ is a Borel set contained in $S_\ell$ and we get 
\[\mathcal{H}^2\mres {S_{\ell}}(B) = \mathcal{H}^2(B\cap S_\ell) \leqslant \mu(B\cap S_\ell) \leqslant \mu(B),\] which proves the inequality~\eqref{claim}. Therefore, to conclude, it suffices to establish the following claim: 

\begin{equation}
\label{eq estimate on the density}
\limsup_{r\to 0} \frac{\mu(\overline{B}(x_0,r))}{\pi r^2}\geqslant 1, \text{ for almost every } x_0\in S_\ell.    
\end{equation}

The proof of \eqref{eq estimate on the density} will be done via two main steps.

\emph{Step 1.(Approximation via a tangent plane)}
Let $x_0\in S_\ell$ such that, $x_0=\ell(t_0)$ with $t_0$ a differentiability point of $\ell$ such that the differential $\dm_{t_0} \ell$ is non degenerate. Such a point exists almost everywhere, since $\ell$ is Lipschitz and therefore differentiable almost everywhere by Rademacher’s Theorem. Moreover, \cite[Lemma 2.96]{AFP00} implies that, almost everywhere, the image of the differential is two-dimensional. Consider the affine map $L : h \mapsto \ell(t_0) + \dm_{t_0}\ell(h)$.

Let $\eta >0$. Since, $\ell$ is differentiable in $t_0$, there exists a radius $r_0 >0$ such that, for all $h \in D(0,r_0)$, 
\begin{equation}
    \frac{|L(h) - \ell(t_0+h)|}{|h|}\leqslant \eta.
\end{equation}

 We define the portion of the tangent plane to $S_\ell$ at point $x_0=\ell(t_0)$ by $T_{r_0} = L(D(0,r_0))$. For simplicity, we assume that $T_{r_0}$ is horizontal, i.e., $T_{r_0}\subset \{(y_1,y_2,y_3) \; |\; y_3=0\}$.  Since $T_{r_0}$ is an ellipse, as the image of a disk under an non degenerate affine map, there exists a smaller radius $0<r'_0\leqslant r_0 $
such that $B(x_0,r)\cap  T_{r_0}=D(x_0,r)$ for all $r\leq r_0'$ (where $D(x_0,r)$ denotes the horizontal disk of radius $r$).

Let now $r\leq r_0'$ be fixed. Since the sequence $(\ell_n)$ uniformly converges to $\ell$, there exists $N>0$ such that for all 
$n>N$, 
\begin{equation}
    \|\ell_n -\ell \|_{\infty} \leqslant  \eta r. \label{proximite}
\end{equation}
Let $n>N$. We denote by 
$$H_r = t_0 + L^{-1}(\overline{D}(x_0,r)).$$
Since $d\ell_{t_0}$ is assumed to be non-degenerate, $H_r$ is an ellipse, being the pre-image of a disk under an affine map. Moreover, $H_r \subset \overline{D}(t_0,Cr)$, where $C$ is a constant that depends only on $d_{t_0}\ell$.  We deduce that, if $r < \frac{r_0}{C}$, then for any $y\in H_r$ and for all $n\geq N$,
\begin{equation}
    |L(y-t_0)-\ell_n(y)| \leqslant C\eta r + \eta r \leqslant \eta C'r, \label{estimation1}
\end{equation}
with $C'=C+1$, which we shall still denote by $C>0$ in the sequel. Since, $C$ does not depend on $r$, we fix $0<r<\min(r_0,\frac{r_0}{C})$. Hence, $\ell_n(H_r)$ is a surface at distance at most $\eta C r$ of $L(H_r-t_0) = \overline{D}(x_0,r)$, which yields that $\ell_n(H_r)\subset \overline{B}(x_0,r(1+C\eta))$. Thus,
\begin{eqnarray}
\mathcal{H}^2(S_{\ell_n}\cap \overline{B}(x_0,r(1+C\eta)))\geqslant \mathcal{H}^2(\ell_n(H_r)). \label{ett}
\end{eqnarray}

Now, let $P:\R^3\to \R^3$ denote the projection onto the horizontal plane $\{(y_1,y_2,y_3) \;|\; y_3=0\}$. We claim that $P(\ell_n(H_r))$ contains a disk of radius at least $(1-C\eta)r$. 
Let $\varphi$ be the smooth parametrization of the ellipse $\partial H_r$ by $\mathbb{S}^1$ with constant speed, namely $\varphi'(\omega) = \frac{\mathcal{H}^1(\partial H_r)}{2\pi}$, where $\mathcal{H}^1(\partial H_r)$ is the length of the ellipse $\partial H_r$. In other words, the curve $\varphi(\mathbb{S}^1)$ makes only one turn around the point $t_0$. In particular, this parametrization is injective and its speed never vanishes. Let also $(\omega_i)_{1\leqslant i\leqslant m}$ be a subdivision of $\mathbb{S}^1$ with $\omega_1 = \omega_m$ (and with $m \geqslant 17$). We denote by $(t_i)_{1\leqslant i\leqslant m}$ the points $\varphi(\omega_i)$ on $\partial H_r$. Recall that $L(\partial H_r) = C(x_0,r)$ is a circle. Since $L$ is affine, the parametrization $L \circ \varphi $ of $C(x_0,r)$ is smooth. Furthermore, since the differential $ \dm _{t_0}\ell$ is non-degenerate, $L$ is injective and its differential $\dm L = \dm _{t_0}\ell$ is also non-degenerate. Hence, the parametrization $L \circ \varphi $ is injective and has non-vanishing speed, ensuring that it traces the circle exactly once around $x_0$. Therefore, one can choose the subdivision $(\omega_i)$ such that $(L(t_i))$ is a regular subdivision of the circle $C(x_0,r)$, more precisely such that $\mathcal{H}^1(L([t_i, t_{i+1}])) = \frac{2\pi r}{m-1}$, and $|L(t_i) -L(t_{i+1})| \geqslant C r \eta
$. Notice that, we also assume, without loss of generality since $\eta$ can be chosen as small as we want, that $C \eta \leqslant \frac{\pi}{8}$. Thus we have constructed a parametrization of $\ell_n(\partial H_r)$ by $\mathbb{S}^1$ as $\Gamma = \ell_n \circ \varphi (\mathbb{S}^1)$, and naturally, $\Gamma$ is a continuous closed curve remaining $C\eta r$ close to the circle $L(\partial H_r)$. More preciselly, the parametrization $\ell_n \circ \varphi$ is $C\eta r$ close to the  parametrization of circle $L\circ \varphi$.

Then, consider the projection of this curve on the horizontal plane $\Tilde{\Gamma} = P(\Gamma)$. $\Tilde{\Gamma}$ is a closed curve contained in the (horizontal) disk $D(x_0,r(1+C\eta))$ and lies at most a distance $C\eta r$ from the circle $C(x_0,r)$. More precisely, $P\circ \ell_n \circ \varphi$ is $C\eta r $-close to $ L \circ \varphi$. We now justify that every half-line emanating from $x_0$ intersects the closed curve $\Tilde{\Gamma}$. Assume by contradiction that, there exists a half line that does not intersect $\Tilde{\Gamma}$. Nevertheless, this half-line intersects the circle $C(x_0,r)$ at a point denoted by $y_0 = L(s_0)$, with $s_0 \in \partial H_r$. Since $(L(t_i))$ is a subdivision of the circle, there exists $1\leqslant i < m$ such that $s_0 \in [t_i,t_{i+1}[ $ then $L(t_{i-3})$ and $L(t_{i+4})$ are in two different connected components of $C(x_0,r) \setminus (x_0,y_0)$, and more precisely at distance at least $\frac{3}{2}C\eta r$ from the line $(x_0,y_0)$, with the convention that $t_{m+k} = t_{k+1}$ (see Lemma~\ref{lem subdivision} for the proof of this fact, with $\psi = L\circ \varphi$ and $\delta = C\eta$). This implies that $P(\ell_n(t_{i-3}))$ and $P(\ell_n(t_{i+4}))$ lies in two distinct components of $D(x_0,r(C\eta+1)) \setminus (x_0,y_0)$. Moreover, by construction, $P(\ell_n([t_{i-3},t_{i+4}]))$ connects $P(\ell_n(t_{i-3}))$ to $P(\ell_n(t_{i+4}))$ and, by assumtion, does not intersect the half line $[x_0,y_0)$. Therefore, there exists $s_1 \in [t_{i-3},t_{i+4}]$ such that $P(\ell_n(s_1))$ intersects the half line $(x_0,y_0) \setminus [x_0,y_0)$, and for all $t\in[t_{i-3},t_{i+4}]]$, $L(t)$ remains at a distance at least $\frac{3}{2}C\eta r$ from this half line $(x_0,y_0)\setminus [x_0,y_0)$ (see Figure~\ref{fig1}). In particular, it yields $$|L(s_1)-P(\ell_n(s_1))|\geqslant \frac{3}{2}C\eta r,$$ which contradicts \eqref{estimation1}.

\begin{figure}[ht]
    \centering
    \includegraphics[width = 0.5\textwidth]{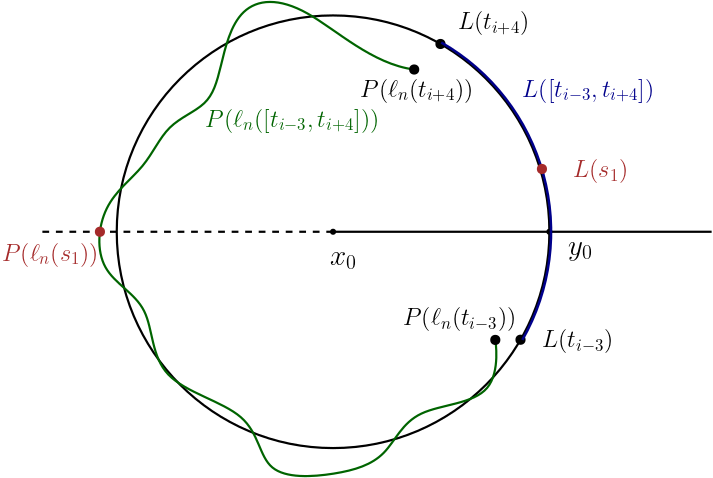}
    \caption{\centering Argument by contradiction to prove that every half-line emanating from $x_0$ intersects the closed curve $\Tilde{\Gamma}$.}
    \label{fig1}
\end{figure}

This yields that the projection of $\Tilde{\Gamma}$ onto the circle $C(x_0,r(1-C\eta))$, is surjective. Let $\pi$ denote this projection on the circle $C(x_0,r(1-C\eta))$, consider

\[\Phi(s,y) =  s \Tilde{\gamma}(y)  +(1-s) \pi( \Tilde{\gamma}(y)), \text{ where } \Tilde{\gamma} = P\circ \ell_n \circ \varphi.\]

$\Phi$ is an homotopy between $\pi \circ \Tilde{\gamma}$ and $\Tilde{\gamma}$. Thus, the curve $\Tilde{\Gamma}$ is homotopy equivalent to the circle $C(x_0,r(1-C\eta)) = \pi(\Tilde{\Gamma})$.

Let us conclude the proof of the claim that, the (open) disk $D(x_0,r(1-C\eta))$ is contained in $P(\ell_n(H_r))$. Assume by contradiction, that there is a point $y$ in the disc $D(x_0,r(1-C\varepsilon))$ that does not belong to the surface $P(\ell_n(H_r))$. On one hand, we have a closed curve $\Tilde{\Gamma}$ in $P(\ell_n(H_r))$ homotopy equivalent to a circle non contractible in $P_{r_0}\setminus \{y\}$ since $y$ is in the open disc. In other words, $\Tilde{\Gamma}$ has a non trivial $\pi_1$. On the other hand however, $\partial H_r$ is an ellipse that is contractible to its center. Therefore, since $P \circ \ell_n$ is continuous $\Tilde{\Gamma} = P(\ell_n(\partial H_r))$ is also contractible in $P(\ell_n(H_r)) \subset T_{r_0} \setminus \{y\}$ \emph{i.e.}, $\Tilde{\Gamma}$ has a trivial $\pi_1$. Hence, the contradiction and thus the claim is proved.

\emph{Step 2.(Density estimate)}
It follows from \emph{Step 1} that 
\[\mathcal{H}^2(\ell_n(H_r))\geqslant \mathcal{H}^2(P(\ell_n(H_r)))\geqslant \pi r^2(1-C\eta)^2,\]
and using \eqref{ett} this yields,
$$\mu_n(\overline{B}(x_0,r(1+C\eta))\geqslant \pi r^2(1-C\eta)^2.$$
Letting $n\to +\infty$ and taking the limsup we get 
$$\mu(\overline{B}(x_0,r(1+C\eta))\geqslant \limsup_n \mu_n(\overline{B}(x_0,r(1+C\eta))\geqslant \pi r^2(1-C\eta)^2.$$

Then, we let $r\to 0$, yielding
$$\limsup_{r\to 0} \frac{\mu(\overline{B}(x_0,r(1+C\eta)))}{\pi r^2 (1+C\eta)^2}\geqslant \frac{(1-C\eta)^2}{(1+C\eta)^2} .$$
We conclude by letting $\eta \to 0$ 
$$\limsup_{r\to 0} \frac{\mu(\overline{B}(x_0,r))}{\pi r^2}\geqslant 1,$$
and so follows the lemma.
\end{proof}

We now provide a technical proof of the geometric claim used in the previous Lemma.

\begin{lemma}
\label{lem subdivision}
Let $0<\delta <\frac{\pi}{8}$. Consider a circle $C(x_0,r)\in \R^2$ parametrized on $\mathbb{S}^1$ by a smooth injective function $\psi$ with non vanishing speed, which insures that the curve $\psi(\mathbb{S}^1)$ makes one and only one turn around the point $x_0$. Thus, there exists $(\omega_i)_{1\leqslant i \leqslant m}$ a subdivision of $\mathbb{S}^1$ such that $m \geqslant 17$, which is regular with respect to $\psi$, \emph{i.e.} for all $i$, $\mathcal{H}^1(\psi([\omega_i,\omega_{i+1}])) = \frac{\mathcal{H}^1(\psi(\mathbb S^1))}{m-1}=\frac{2\pi r}{m-1}$, and such that for all $i$, $|\psi(\omega_i) - \psi(\omega_{i-1})| \geqslant r\delta$. Let $y_0 = \psi(w)$ be an arbitrary point on the circle $C(x_0,r)$. Then, there exists an index $i$ such that $w \in [\omega_i,\omega_{i+1}[ $ and such that $\psi(\omega_{i-3})$ and $\psi(\omega_{i+4})$ lie in two different connected components of $C(x_0,r) \setminus (x_0,y_0)$   and at distance at least $\frac{3}{2} \delta r$ from the line $(x_0,y_0)$ (with the convention $\omega_{m+k} = \omega_{k+1}$).

\end{lemma}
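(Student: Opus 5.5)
We reduce to elementary plane geometry by working with arc lengths along the circle. Since $\psi$ is smooth, injective and has non-vanishing speed, the map $\omega \mapsto \psi(\omega)$ is a homeomorphism onto $C(x_0,r)$ that traverses the circle once. We can therefore reparametrize by the angle $\theta \in \R/2\pi\mathbb{Z}$ measured from $x_0$, writing $\psi(\omega)=x_0+r e^{i\theta(\omega)}$ with $\theta$ monotone.

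A regular subdivision in the sense of $\mathcal{H}^1$ then simply means that the angles satisfy $\theta(\omega_{i+1})-\theta(\omega_i)=\frac{2\pi}{m-1}=:\beta$. Consecutive chord lengths are $2r\sin(\beta/2)$, so the condition $|\psi(\omega_i)-\psi(\omega_{i-1})|\geqslant r\delta$ amounts to $2\sin(\beta/2)\geqslant\delta$. This, together with $m\geqslant 17$, is what we use: it fixes a lower bound $\beta \geqslant \delta$ (since $2\sin(\beta/2)\leqslant\beta$). Since $m \geqslant 17$, it also gives an upper bound $\beta\leqslant \pi/8$.

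\textbf{Choice of $i$.} Given $y_0=\psi(w)$, the $\omega_i$ partition $\mathbb{S}^1$, so there is a unique $i$ with $w\in[\omega_i,\omega_{i+1}[$. Rotate coordinates so that $y_0=x_0+r e^{i0}$, so the line $(x_0,y_0)$ is the horizontal axis through $x_0$. Then $\theta_i\in\,]-\beta,0]$, and hence
\[
\theta_{i-3}\in\,]-4\beta,-3\beta], \qquad \theta_{i+4}\in\,]3\beta,4\beta].
\]

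\textbf{Separation across the line.} The distance of $x_0+re^{i\theta}$ to the line $(x_0,y_0)$ is $r|\sin\theta|$. Because $4\beta\leqslant 4\pi/16=\pi/4<\pi$, the two points $\psi(\omega_{i-3})$ and $\psi(\omega_{i+4})$ lie strictly on opposite sides of the line. They are therefore in different connected components of $C(x_0,r)\setminus(x_0,y_0)$, namely the lower and upper open semicircles.

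\textbf{Quantitative distance.} For the distance bound, $|\theta|\geqslant 3\beta$ and $|\theta|\leqslant 4\beta\leqslant\pi/2$ give $r|\sin\theta|\geqslant r\sin(3\beta)$. On $[0,\pi/2]$ we have $\sin x\geqslant \frac{2}{\pi}x$, hence
\[
r\sin(3\beta)\geqslant \frac{6}{\pi}\beta r\geqslant \frac{6}{\pi}\delta r\geqslant \frac{3}{2}\delta r,
\]
using $\beta\geqslant\delta$ and $6/\pi>3/2$.

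\textbf{Main obstacle.} The only delicate point is the bookkeeping with the wraparound convention $\omega_{m+k}=\omega_{k+1}$. This convention exists because $\omega_1=\omega_m$, so the indices really live in $\mathbb{Z}/(m-1)\mathbb{Z}$. We must also make sure that the shifts by $-3$ and $+4$ do not wrap past half the circle. That is exactly why $m\geqslant17$ is imposed: the window of $8$ arcs spans $8\beta\leqslant\pi$, which guarantees the two endpoints land on opposite semicircles rather than wrapping around to the same one.
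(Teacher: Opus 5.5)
Your proof is correct and follows the same route as the paper. You locate $w$ in $[\omega_i,\omega_{i+1}[$ and show that $\psi(\omega_{i+4})$ and $\psi(\omega_{i-3})$ lie on opposite sides of $y_0$, at angular distance between $3\beta$ and $4\beta\leqslant\pi/2$ from it; then chord $\leqslant$ arc gives $\beta\geqslant\delta$, and $\sin x\geqslant\frac{2}{\pi}x$ on $[0,\pi/2]$ finishes, so you are recasting the paper's arc-length bookkeeping in explicit angular coordinates. One harmless slip: $\beta\leqslant\pi/8$ gives $4\beta\leqslant\pi/2$, not $\pi/4$, and the window from $\omega_{i-3}$ to $\omega_{i+4}$ spans $7\beta$, not $8\beta$; neither affects the argument, since only $4\beta<\pi$ is needed there.
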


\begin{proof}
By definition of the subdivision, there exists a unique index $i$ such that $w \in [\omega_i, \omega_{i+1}[$. First, we justify why $\psi(\omega_{i-3})$ and $\psi(\omega_{i+4})$ lie in two different connected components of $C(x_0,r) \setminus (x_0,y_0)$. Since $\psi([w,\omega_{i+4}]) \subset \psi([\omega_i,\omega_{i+4}])$, the assumption $m \geqslant 17$ yields that the  circular arc $\psi([w,\omega_{i+4}])$ has a smaller length than $4 \frac{2\pi r}{m-1} \leqslant r \frac{\pi}{2}$. Thus, $\psi( \omega_{i+4})$ remains in the same connected component as $\psi(\omega_{i+1})$. Similarly, $\psi(\omega_{i-3})$ remains in the same connected component as $\psi(\omega_{i-1})$ and we can conclude that $\psi(\omega_{i-3})$ and $\psi(\omega_{i+4})$ are in two different connected components of $C(x_0,r) \setminus (x_0,y_0)$.

Secondly, we need to show that $\psi(\omega_{i-3})$ and $\psi(\omega_{i+4})$ are  at distance at least $\frac{3}{2}\delta r$ from the line $ (x_0,y_0)$. We develop the proof for $\psi(\omega_{i+4})$, naturally by symmetry, the proof for $\psi(\omega_{i-3})$ is analogous. One can compute explicitly the distance, denoted by $d$, between $\psi(\omega_{i+4})$ and the line $(x_0,y_0)$.

\[d = r \sin \left(\frac{\mathcal{H}^1(\psi([w,\omega_{i+4})))}{r} \right).\]

The assumption on the subdivision yields 
\begin{align*}
 \mathcal{H}^1(\psi([w,\omega_{i+4}]))&\geqslant \mathcal{H}^1(\psi([\omega_{i+1},\omega_{i+4}])) = 3\mathcal{H}^1(\psi([\omega_{i+1},\omega_{i+2}]))\\
 &\geqslant 3|\psi(\omega_{i+1})-\psi(\omega_{i+2})| \geqslant 3 r \delta. 
\end{align*}
Furthermore, by concavity of $t\mapsto \sin(t)$ on $[0,\frac{\pi}{2}]$ we get
\[d \geqslant r \sin(3\delta) \geqslant r \frac{2}{\pi} 3\delta > \frac{3}{2} \delta r,\]
which achieves the proof of this Lemma.
\end{proof}

With the lower semi-continuity now established, the existence theorem follows directly.

\begin{proposition}
For all $u \in H^1(\C)\cap C(\overline{\Ccal})$ there exists a minimizer for the problem \eqref{probMin}. 
\end{proposition}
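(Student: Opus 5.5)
The plan is to apply the direct method in the class $\curve^\Lambda(\gamma_0,\gamma_1)$. The class is nonempty: we assume, as is implicit in the setting, that $\Lambda$ is large enough for some admissible homotopy to exist. The functional is nonnegative, so the infimum $m$ of \eqref{probMin} is finite.

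Take a minimizing sequence $(\ell_n)\subset\curve^\Lambda(\gamma_0,\gamma_1)$. The maps $\ell_n$ are $\Lambda$-Lipschitz from the compact set $[0,1]\times\mathbb{S}^1$ into the compact set $\overline{\Ccal_0}$. By Ascoli--Arzel\`a, a subsequence converges uniformly to some $\ell$. The limit $\ell$ is $\Lambda$-Lipschitz, takes values in $\overline{\Ccal_0}$, and still satisfies $\ell(0)=\gamma_0$ and $\ell(1)=\gamma_1$, since these hold pointwise for each $\ell_n$.

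The next step is to check that $S_\ell$ is $\Lambda$-upper Ahlfors regular. Fix $x\in S_\ell$ and $r>0$. Choose $t$ with $\ell(t)=x$ and set $x_n=\ell_n(t)\in S_{\ell_n}$, so that $x_n\to x$. For any $r'>r$ we have $B(x,r)\subset B(x_n,r')$ once $n$ is large. Lemma~\ref{golab}, applied with the open set $A=B(x,r)$, gives
\[\Haus(S_\ell\cap B(x,r))\leqslant \liminf_n \Haus(S_{\ell_n}\cap B(x,r))\leqslant \liminf_n\Haus(S_{\ell_n}\cap B(x_n,r'))\leqslant \Lambda\pi r'^2.\]
Letting $r'\downarrow r$ gives the bound, so $\ell\in\curve^\Lambda(\gamma_0,\gamma_1)$.

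Lower semicontinuity of $\ell\mapsto\int_{S_\ell}(u^2+\delta_\eps)\,d\Haus$ then follows from Lemma~\ref{golab}. Set $g:=u^2+\delta_\eps$, which is continuous, positive and bounded on $\overline{\Ccal}$; extend it continuously to $\R^3$ with values in $[0,\sup g]$. By the layer-cake formula,
\[\int g\,d(\Haus\mres S_{\ell_n})=\int_0^{\sup g}\Haus\mres S_{\ell_n}(\{g>s\})\,ds.\]
Each superlevel set $\{g>s\}$ is open, so Lemma~\ref{golab} gives $\liminf_n \Haus\mres S_{\ell_n}(\{g>s\})\geqslant \Haus\mres S_\ell(\{g>s\})$ for every $s$. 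Fatou's lemma in $s$ then yields $\int_{S_\ell}g\,d\Haus\leqslant\liminf_n\int_{S_{\ell_n}}g\,d\Haus=m$. Hence $\ell$ is a minimizer.

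The only delicate step is the closedness of the Ahlfors regularity constraint under uniform convergence. This is exactly why the estimate above goes through open balls and the recentring $x_n\to x$. Everything else is a direct consequence of Ascoli--Arzel\`a and Lemma~\ref{golab}, with the continuity of $u$ used to make the superlevel sets open.
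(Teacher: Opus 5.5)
Your proof is correct and follows the paper's route: Ascoli--Arzel\`a gives compactness, and Lemma~\ref{golab} gives both the lower semicontinuity and the closedness of the $\Lambda$-upper Ahlfors condition. The differences are technical and in your favour: your layer-cake/Fatou argument on the open superlevel sets $\{g>s\}$ needs only the statement of Lemma~\ref{golab}, whereas the paper goes through the weak limit $\mu\geqslant\Haus\mres S_\ell$ taken from its proof; and your recentring at $x_n=\ell_n(t)\in S_{\ell_n}$ with $r'\downarrow r$ handles the fact that $x\in S_\ell$ need not lie on $S_{\ell_n}$, which the paper glosses over by applying the Ahlfors bound of $S_{\ell_n}$ directly at $x$.
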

\begin{proof} Let $(\ell_n)$ be a minimizing sequence. Then, $(\ell_n) \subset \curve^\Lambda$ is a sequence of uniformly Lipschitz functions. Therefore, there exists a subsequence, still denoted by $(\ell_{n})$, which converges uniformly to $\ell \in \curve^\Lambda$. Thus, Lemma~\ref{golab} yields that, for all open set $A$
\[\liminf \Haus \mres S_{\ell_n}(A) \geqslant \Haus \mres S_\ell(A).\]
More prescisely, we have established that the sequence of measures $\mu_n:= \mathcal{H}^2\mres {S_{\ell_n}}$ weakly converges to a measure $\mu$, such that 
\begin{eqnarray}
\mathcal{H}^2\mres {S_{\ell}} \leqslant \mu . 
\end{eqnarray}

Besides, since $u^2+\delta_\varepsilon$ is a continuous function with constant value (equal to $1$) on the boundary of the domain $\mathcal{C}$, we can extend this function on $\R^3$. This extension is a continuous function with support in the compact $\overline{\Ccal}$. Moreover, since the measures $\mu_n$ and $\mu$ are also supported on the compact $\overline{\mathcal{C}}$, the duality between compactly supported continuous function and Radon measures yields 
\[\int_{S_\ell} (u^2+\delta_\varepsilon) \dm \Haus \leqslant \int_\Ccal (u^2+\delta_\varepsilon) \dm \mu =\lim \int_{\Ccal} (u^2+\delta_\varepsilon) \dm\mu_n = \liminf \int_{S_{\ell_n}} (u^2+\delta_\varepsilon) \dm \Haus,\]
because $u^2+\delta_\varepsilon$ is a positive function.

This conclude the proof of the lower semi-continuity of the functional $\ell \mapsto \int_{\surf_{\ell}} (u^2 + \delta_\eps) d\Haus$. 

Finally, to obtain the existence of minimizers for this functional, it remains to establish that
 $S_\ell$ is $\Lambda$-upper Ahlfors regular. This follows directly from Lemma~\ref{golab}: for all $x\in S_\ell$ and $r>0$, 
\[\Haus(B(x,r)\cap S_\ell) \leqslant \liminf \Haus(B(x,r)\cap S_{\ell_n}).\]
Since, for every $n$, the surface $S_{\ell_n}$ is $\Lambda$-Ahlfors regular, we get
\[\Haus(B(x,r)\cap S_\ell) \leqslant \Lambda \pi r^2.\]
Thus, $\ell \in \curve^\Lambda$ and the Proposition is proven.
\end{proof}

\section{Existence of a minimizer in the variable \texorpdfstring{$(u,\ell)$}{(u,l)} for \texorpdfstring{$\ell \in \curve^\Lambda$}{l in Hom lambda} }
\label{section rien fixe}

Combining the above results, we can now prove the global existence of a minimizer in both variables (Theorem~\ref{main th}), announced in the introduction.

%
%
%
%

\begin{proof}[Proof of Theorem~\ref{main th}]

Let $(u_n,\ell_n)$ be a minimizing sequence. We may assume that, for all $n$, $u_n$ is the solution of the problem of minimization in the $u$ variable with fixed $\ell_n$, for which the existence and uniqueness is guaranteed by Proposition~\ref{prop : holder regularity}. Indeed, when replacing $u_n$ by the minimizer in the $u$ variable, it remains a minimizing sequence. And Proposition~\ref{prop : holder regularity} yields that the Hölder norms of $u_n$ are uniformly bounded. Hence, Ascoli Theorem yields that $u_n$ converges to $u \in H^1(\Ccal)$, uniformly. Furthermore, $u$ is $\alpha$-Hölder continuous for all $0<\alpha<1$, with the desired estimate on the Hölder norm. Moreover, $\ell_n$ converges to $\ell \in \curve^\Lambda$ also uniformly. Since the Ambrosio-Tortorelli part of the functional $ \varepsilon\int_{\mathcal{C}} |\nabla u|^{2} dx + \frac{1}{4\varepsilon}\int_{\mathcal{C}}(1-u)^{2} dx $ is clearly lower semi-continuous, it remains to check the lower semi-continuity of the penalization term, i.e. that 

$$\int_{\surf_{\ell}} (u^{2} + \delta_\eps) d\Haus \leq \liminf\int_{\surf_{\ell_n}} (u_n^{2} + \delta_\eps) d\Haus.$$

To that aim, we will adapt the proof developed in the previous Section. 

We denote by $v_n := u_n^2 + \delta_\varepsilon$ and $v := u^2+ \delta_\varepsilon$. 
Consider the sequence of measures $\mu_n:= v_n \mathcal{H}^2\mres {S_{\ell_n}}$ which are uniformly bounded, since $v_n$ is uniformly bounded by $1+\delta_\eps$ and the measures $\mathcal{H}^2\mres {S_{\ell_n}}$ are uniformly bounded, for the same reasons as in the proof of Lemma~\ref{golab}. Thus, the sequence weakly converges to a measure $\mu$ and the lower-semi-continuity behavior with respect to weak convergence yields that 
$$\mu(U) \leq \liminf \mu_n(U), \text{ for all } U\subset \R^3 \text{ open set}. $$
Hence, it is enough to prove that 
\begin{eqnarray}
v \mathcal{H}^2\mres {S_{\ell}} \leqslant \mu . 
\end{eqnarray}
Since, $\mu$ is a Radon measure, Besicovitch derivation theorem insures that for $\Haus\mres S_\ell$-almost every $x$ the limit 
\[f(x) := \lim_{r\to 0} \frac{\mu(B(x,r))}{\omega_2 r^2}\] exists and moreover, $\mu = f\Haus\mres S_\ell + \mu \mres E$, where $E$ is a $\Haus\mres S_\ell$-negligible set. Since $\mu$ is a positive Radon measure this yields that $\mu \geqslant f \Haus\mres S_\ell$. Hence, it is enough to show that for almost all $x \in S_\ell$, $f(x) \geqslant v(x)$. 

By \emph{Step 1} of Lemma~\ref{golab}, for almost every $x\in S_\ell$ and all $\eta >0$ there exits a radius $r_0$ such that, for all $r<r_0$, there exists a rank $N$ satisfying that for all $n>N$,
\[\Haus(S_{\ell_n}\cap \overline{B}(x,r(1+C\eta))) \geqslant \pi r^2(1-C\eta)^2.\]

Only \emph{Step 2} of Lemma~\ref{golab} needs to be adapted. By assumption, $v_n$ converges uniformly to $v$. Therefore there exists a rank $N'>N$ such that 
\[\|v_n-v\|_{\infty} \leqslant \eta.\]
We consider $n> N'$. Since $u_n$ is $\alpha$-Hölder and bounded from above by 1, we get for all $y$,
\begin{align*}
    |v_n(x) - v_n(y)| &= |u_n(x)^2 + \delta_\eps - u_n(y)^2 - \delta_\eps|,\\
    &\leqslant |u_n(x)+u_n(y)||u_n(x)-u_n(y)|, \\
    &\leqslant 2C|x-y|^\alpha.
\end{align*}
Thus, up to considering $\eta$ and $r$ small enough (with respect to $v(x)$) we can guarantee the positivity of the term 
\begin{align*}
    &v_n(x)-2Cr^\alpha(1+C\eta)^\alpha \geqslant v(x)-2Cr^\alpha(1+C\eta)^\alpha -\eta \geqslant 0.
\end{align*}
Hence, 
\begin{align*}
    &\mu_n(\overline{B}(x,r(1+C\eta))) = \int_{S_{\ell_n}\cap \overline{B}(x,r(1+C\eta)) } v_n(y) \dm \Haus(y), \\
    &\geqslant \int_{S_{\ell_n}\cap \overline{B}(x,r(1+C\eta)) } (v_n(x)-2Cr^\alpha(1+C\eta)^\alpha) \dm \Haus(y), \\
    &= (v_n(x)-2Cr^\alpha(1+C\eta)^\alpha)\Haus(S_{\ell_n}\cap \overline{B}(x,r(1+C\eta))), \\
    &\geqslant (v_n(x)-2Cr^\alpha(1+C\eta)^\alpha)\pi r^2(1-C\eta)^2,\\
    &\geqslant (v(x)-2Cr^\alpha(1+C\eta)^\alpha -\eta)\pi r^2(1-C\eta)^2.
\end{align*}
Then, letting $n \to \infty$ yields
\begin{align*}
    \mu(\overline{B}(x,r(1+C\eta))&\geqslant \limsup_n \mu_n(\overline{B}(x,r(1+C\eta)),\\
    &\geqslant (v(x)-2Cr^\alpha(1+C\eta)^\alpha - \eta) \pi r^2(1-C\eta)^2.
\end{align*} 
Finally, we let $r\to 0$, yielding
$$\lim_{r\to 0} \frac{\mu(\overline{B}(x,r(1+C\eta)))}{\pi r^2 (1+C\eta)^2}\geqslant (v(x)-\eta) \frac{(1-C\eta)^2}{(1+C\eta)^2} ,$$
and we conclude by letting $\eta \to 0$ 
$$f(x) = \lim_{r\to 0} \frac{\mu(\overline{B}(x,r))}{\pi r^2}\geqslant v(x),$$
which achieves the proof of the lower semi-continuity.
\end{proof}


\bibliography{biblio2}

\begin{thebibliography}{1}

\bibitem{adams2003sobolev}
Robert~A Adams and John~JF Fournier.
\newblock {\em Sobolev spaces}, volume 140.
\newblock Elsevier, 2003.

\bibitem{AFP00}
Luigi Ambrosio, Nicola Fusco, and Diego Pallara.
\newblock {\em Functions of bounded variation and free discontinuity problems}.
\newblock Oxford Math. Monogr. Oxford: Clarendon Press, 2000.

\bibitem{bonnivard2025phasefieldapproximationplateaus}
Matthieu Bonnivard, Elie Bretin, Antoine Lemenant, and Eve Machefert.
\newblock Phase field approximation for plateau's problem: a curve geodesic
  distance penalty approach, 2025.

\bibitem{bonnivard2018phase}
Matthieu Bonnivard, Antoine Lemenant, and Vincent Millot.
\newblock On a phase field approximation of the planar steiner problem:
  existence, regularity, and asymptotic of minimizers.
\newblock {\em Interfaces and free Boundaries}, 20(1):69--106, 2018.

\bibitem{bonnivard2015approximation}
Matthieu Bonnivard, Antoine Lemenant, and Filippo Santambrogio.
\newblock Approximation of length minimization problems among compact connected
  sets.
\newblock {\em SIAM Journal on Mathematical Analysis}, 47(2):1489--1529, 2015.

\bibitem{gilbargelliptic}
David Gilbarg, Neil~S Trudinger, David Gilbarg, and NS~Trudinger.
\newblock {\em Elliptic partial differential equations of second order}, volume
  224.
\newblock Springer, 1977.

\bibitem{Zi}
William~P. Ziemer.
\newblock {\em Weakly differentiable functions. {Sobolev} spaces and functions
  of bounded variation}, volume 120 of {\em Grad. Texts Math.}
\newblock Berlin etc.: Springer-Verlag, 1989.

\end{thebibliography}
\bibliographystyle{plain}

 \end{document}